\documentclass[11pt]{article}

\usepackage{amssymb,amsmath}
\usepackage{color}
\usepackage{graphicx}

\newtheorem{Theorem}{Theorem}[section]

\newtheorem{Lemma}{Lemma}[section]
\newtheorem{Definition}{Definition}[section]

\newtheorem{Proposition}{Proposition}[section]

\numberwithin{equation}{section}

\newcommand{\non}{\nonumber}
\newcommand{\cR}{\mathbb R}
\newcommand{\cN}{{\mathbb N}}

\newcommand{\eq}[1]{\mbox{\rm {(\ref{#1})}}}

\begin{document}
\title{\Large\bf Analysis validation of a new continuum model for the evolution of
grain boundaries in polycrystalline materials
}
\author{{\small\sc
Peicheng Zhu$^1$\footnote{E-mail: pczhu@shu.edu.cn}, Xiaoxue Qin$^1$\footnote{Corresponding author. E-mail: qinxiaoxue@shu.edu.cn}}
and Yang Xiang$^2$\footnote{Corresponding author. E-mail: maxiang@ust.hk}
\\
{\small\sc $^1$ Department of   Mathematics, Shanghai University,}\\
{\small\sc  Shanghai 200444, P.R. China} \\
{\small\sc $^2$ Department of Mathematics, Hong Kong University of  Science and  Technology,}\\
{\small\sc  Clear water bay, Kowloon, Hong Kong}
}

\date{}

\maketitle
\noindent{\bf Abstract.}  To  describe the evolution of grain boundaries based
on the underlying microscopic mechanisms of line defects (disconnections) and the integrated effects of a
diverse range of thermodynamic driving forces, Zhang, et al.
in 2017 formulated a continuum equation.
We prove the global-in-time existence, uniqueness, and regularity of the weak solution to an initial-boundary value problem for this model.
The existence, uniqueness of the stationary  solution are also established. Finally, we investigate the large-time behavior of the weak solution of the evolution problem, and show that the solution converges
 to the stationary solution in a suitable sense.
 Numerical simulations are carried out to validate the analysis results.
The main difficulties in the proof of main theorems are due to
a non-local term with singularity,
a non-smooth coefficient of the highest derivative associated with the gradient of the unknown, and the special form
of free energy which is {\it not} uniformly bounded from below.
The key ingredients in the proof are the energy method, an estimate for
a singular integral of the Hilbert type,  Fourier transform of Hilbert transformations, and an estimate
with a weight, for time-derivative of the unknown.

\medskip
\noindent{\bf Keywords.} Motion of grain boundaries; Disconnections;
Initial-boundary value problem; Global existence; Asymptotic stability

%
%

\section{Introduction}
\label{sec1}

 Grain boundaries (GBs) are  the interfaces between  differently oriented
 crystalline grains, which  are a kind of two-dimensional defects in
 materials. GB migration controls many
 microstructural evolution processes in materials. Since GBs are
 interfaces between crystals, the microscopic
mechanisms by which they move are intrinsically different
from other classes of interfaces, such as solid-liquid interfaces and
 biological cell membranes. A polycrystalline material can be regarded,
 on the mesoscale,  as a network  of grain boundaries. And this GB network
 has a great impact on a wide range of  materials properties, such as
 strength, toughness, electrical conductivity, andthus its evolution is
 important for engineering materials~\cite{SB95}.

Both experiments and atomistic simulations have shown that the
 microscopic mechanism of GB migration is associated
with the motion of topological line defects, i.e.,
disconnections~\cite{A72,HP96,HPL07, KS80,MTP04,RMLC13,RLCMM13,TCHPS17}.
This  dependence on microscopic structures enables broad-range and deep
understanding of GB migration, such as the stress-driven motion and the shear
coupling effect~\cite{CT04, LEWP53}, these cannot be described by the
motion by  classical mean curvature models in which the driving forse  is
capillary forces~\cite{SB95}.

A new continuum model  for motion of grain boundaries based on the
underlying disconnection mechanisms has been developed by Zhang {\it et al.}
\cite{ZHXS17}. This  model combines the effects of a
diverse range of thermodynamic driving forces including the stress-driven
motion and is able to model the shear coupling effect in the process of   GB motions.
This  model has been also generalized to the phenomena with multiple disconnection modes and
GB triple junctions, see~\cite{WTHSX19,WZHSX20,ZHSX21}.

In a previous work~\cite{ZhuYuXiang23}, the authors proved the existence of weak solutions to an
initial-boundary value problem of this  model, under an assumption that
a parameter is suitably large. In the present article, we will remove a restriction  and study
global existence and large-time behavior of solutions to an IBVP of the continuum model, which reads
\begin{eqnarray}
 h_{t} = - M_{d}\big((\sigma_{i}+\tau)b+\Psi H-\gamma H h_{xx}\big)(|h_{x}|+B)
 \label{1.1}
\end{eqnarray}
for $(t,x)\in(0,\infty)\times\Omega$,  where $\Omega=(c,d)$.
The boundary and initial conditions are
\begin{eqnarray}
 h |_{x=c} &=& h |_{x=d},\quad h_{x} |_{x=c} = h_{x} |_{x=d},\ (t,x)\in(0,T_{e})\times\partial\Omega,
 \label{1.2}\\
 h(0,x) &=& h_{0}(x),\ x\in\Omega,
 \label{1.3}
\end{eqnarray}
where
\begin{eqnarray}
 \sigma_{i}(t,x) = {\rm P.V.}\int_{-\infty}^{\infty}\frac{K\beta h_{x}(t,x_{1})}{x-x_{1}}dx_{1},
 \label{1.4}
\end{eqnarray}
and
$$
 \beta=\frac{b}{H},\ K = \frac{\mu}{2\pi(1-\nu)},\
  B = \frac{2H}{a} e^{-F_{d}/(k_{B}T)}.
$$

In Equation \eqref{1.1} $h$ is the unknown function that is the height of grain
boundary from the reference line, and $\sigma_{i}(x,t)$ is the stress due to the
elastic interaction between disconnections based on their dislocation
nature~\cite{HL,Xiang06}.
The parameters $M_{d}$, $b$,  $H$, $\mu$, $\nu$, $\gamma$, $\tau$, $a$,  $k_{B}$, $T$ and $F_{d}$ are,
respectively, the mobility constant, the Burgers vector, step height of a disconnection,    the shear modulus,
Poisson ratio,   the GB energy,   the applied stress, the lattice constant, the
Boltzmann constant, the temperature, and  the disconnection formation energy. And
$\Psi$ is energy jump across the GB, the parameter $B$ is associated with the equilibrium density of the
disconnection pairs, and $\frac{1}{a} e^{-F_{d}/(k_{B}T)}$ is the equilibrium
disconnection density.

Note that the regime of $B\rightarrow 0$ means small equilibrium disconnection
density, and  when $B=0$, the
equation \eqref{1.1} is degenerate at those points where $h_x=0$. Numerical
results in Ref.~\cite{ZHXS17} showed that sharp corners may be developed in the
GB profile in the case of $B=0$.

The  difficulties in the proofs of the existence and uniqueness theorems come from
the non-local term with singularity together with a non-smooth coefficient
associated with $|h_x|$ of the highest derivative $h_{xx}$,  the degeneracy
of the equation in the case of $B=0$, and from that the free energy which contains a
 term of the unknown itself and consequently is {\it not} bounded from below.
Actually the total free energy reads
$$
 M_{d}\int_{0}^{L}\big(\frac\gamma2 H |h_{x}|^2 + (\tau b + \Psi H)h + \frac{b}2\sigma_{i}h\big)dx,
$$
the linear term of $h$ can not be bounded by the other terms in the case of
periodic boundary conditions, hence this free energy is not uniformly bounded from below and
causes difficulty when deriving {\it a priori} estimates.

Our strategies for overcoming these difficulties are as follows. First to
 estimate this singular integral term,
we employ a theorem in the book by Stein~\cite{Stein}, and the Fourier transformation of
 a singular integral of Hilbert type, see, e.g., \cite{King09}. Regularization is performed
so that the coefficient of the $h_{xx}$ term is smooth and uniformly bounded
from below, and to estimate the linear term of unknown we make use of a reciprocal weighted norm for
$h_t$ so that this term is bounded by the  $L^2(\Omega)-$norm of $h_{x}$ multiplied by a
small number, see~Lemma~\ref{lm3.2}.
Then  compactness lemmas are employed to obtain the results for
the original equations.

The dependence  on non-smooth gradient terms in the
coefficient of the highest derivative also appeared in the phase field models
proposed by Alber and Zhu  in~\cite{AZ06,AZ08} to describe the evolution of an
interface driven by configurational forces, and existence of weak solutions have
been studied in~\cite{AZ07,AZ11,BZGZ24,Zhu11B}.
Acharya, et al~\cite{AMZ10} and Hilderbrand, et al~\cite{HM10} studied models with similar
non-smooth gradient terms too.

\vskip0.5cm
\noindent{\bf Non-dimensionalized form of the equation.}  Using $M_d\mu$ as the time unit,
$\mu$ the unit of $\sigma_i$, $\tau$ and $\Psi$, $L_0$ the unit of the length scale
of the continuum equation, and $\mu L_0$ the unit of $\gamma$, we then obtain the dimensionless
form of the equation. Further introducing parameters
$$
 \alpha_{1} = \gamma H,\
 \alpha_{2} =  b, \
 \alpha_{3} =  \tau b+\Psi H,
$$
where all the quantities are in dimensionless form, equation  \eqref{1.1} can be written as
\begin{eqnarray}
 h_{t} - \frac{\alpha_{1}}{2}\left( |h_{x}|h_{x}  + 2Bh_{x}\right)_x
 + (\alpha_{2}\sigma_i + \alpha_{3} )(|h_{x}| + B) =0.
 \label{1.5}
\end{eqnarray}
Here we have used the  formula
 $ (|y|y)^{'} = 2|y|$. From now on, we will use this non-dimensionalized equation with
 the dimensionless parameters described above.

\vskip0.25cm
To define weak solutions to the initial-boundary value
 problem  (\ref{1.1}) -- (\ref{1.3}), we denote by $\Omega=(c,d)$
    a bounded open interval with constants
  $c<d$,  by $T_{e}>0$   an arbitrary constant, and by
   $Q_{T_e}$ the domain $(0,T_{e})\times\Omega$.  Define
$$
 (v_1,v_2)_{Z}=\int _{Z}v_1(y)v_2(y)dy
$$
for $Z=\Omega$ or $Z=Q_{T_{e}}$. Moreover, if $v$ is a function
defined on $Q_{T_{e}}$, we use $v(t)$ to represent the mapping
$x\mapsto v(t,x)$ and sometimes write $ v = v(t)$ for convenience.

\vskip0.5cm
\noindent{\bf Statement of the main results.} Our main results are concerned
with the existence and uniqueness of weak solution to an initial-boundary
 value problem.

\begin{Definition}\label{def1.1}
 Let $h_{0}\in L^{1}(\Omega)$. A function $h$ with
\begin{eqnarray}
 h\in  L^{2}(0,T_{e};H^{1}_{{\rm per}}(\Omega))
 \label{1.6}
\end{eqnarray}
is called a weak solution to  problem \eqref{1.1} -- \eqref{1.3}, if for
all $\varphi\in C_{c,x{\text-}per}^{\infty}([0,T_{e})\times\Omega)$, there holds
\begin{equation}
 (h,\varphi_{t})_{Q_{T_{e}}} - \frac{\alpha_{1}}{2} ( |h_{x}|h_{x} + 2B h_{x},\varphi_{x} )_{Q_{T_{e}}}
 - ((\alpha_{2}\sigma_i + \alpha_{3} )(|h_{x}| + B)  , \varphi) _{Q_{T_{e}}}
 + (h_{0},\varphi(0))_{\Omega}=0.
 \label{1.7}
\end{equation}

\end{Definition}

We then have
\begin{Theorem}\label{thm1.1} Assume that   $h_{0}\in H^{1}_{{\rm per}}(\Omega) $.
Then there exists a unique
  weak solution $h$ to
 problem \eqref{1.1} -- \eqref{1.3} with $B>0$, which in addition
to \eq{1.6}, satisfies
\begin{eqnarray}
 h\in L^{\infty}(0,T_{e};H^{1}_{{\rm per}}(\Omega)),\ \ h_{x}\in L^{2}(0,T_{e};H^{1}_{{\rm per}}(\Omega))
 \cap L^{{3}}(Q_{T_{e}}),
 \label{1.8} \\
 h_{t}\in L^{\frac{3}{2}}(Q_{T_{e}}),\ \
 |h_{x}|h_{x} \in L^{\frac{3}{2}}(0,{T_{e}}; W^{1,\frac{3}{2}}_{{\rm per}}(\Omega)).
 \label{1.9}
\end{eqnarray}

\end{Theorem}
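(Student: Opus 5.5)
We construct the solution by regularization, a Galerkin scheme, uniform a priori estimates and compactness, and then prove uniqueness by an $L^2$ contraction argument.

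\textbf{Regularization and Galerkin scheme.} We replace $|h_x|$ by $\sqrt{h_x^2+\varepsilon^2}$ in both the principal part and the lower-order factor $(|h_x|+B)$. We also mollify $h_0$ and cut off the Hilbert-type operator. Since $h$ is periodic, we interpret $\sigma_i=K\beta\,\mathcal H(h_x)$ through its Fourier multiplier $-i\pi\,\mathrm{sgn}(\xi)$ (up to constants), so that $\|\sigma_i\|_{L^p}\le C_p\|h_x\|_{L^p}$ for $1<p<\infty$ by Stein's theorem. We then solve the regularized equation by Galerkin approximation in the Fourier basis of $H^1_{\rm per}(\Omega)$. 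The coefficient $\alpha_1(\sqrt{h_x^2+\varepsilon^2}+B)\ge \alpha_1 B>0$ makes the problem uniformly parabolic, so local ODE solvability is standard. Global existence of the approximations follows from the estimates below.

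\textbf{A priori estimates, independent of $\varepsilon$ and the Galerkin dimension.} We test with $-h_{xx}$, which is the natural energy test. The principal term gives
\[
\frac{\alpha_1}{2}\int (2|h_x|+2B)h_{xx}^2 \;\ge\; \alpha_1 B\|h_{xx}\|^2+\alpha_1\int |h_x|h_{xx}^2 .
\]
The nonlocal term is bounded by
\[
\Big|\int \alpha_2\sigma_i(|h_x|+B)h_{xx}\Big|\le C\|\sigma_i\|_{L^4}\|h_x\|_{L^4}\|h_{xx}\|+C\|\sigma_i\|\|h_{xx}\| .
\]
By the Hilbert bound and Gagliardo--Nirenberg (using $\int h_x=0$ by periodicity) we have $\|h_x\|_{L^4}^2\le C\|h_x\|^{3/2}\|h_{xx}\|^{1/2}$. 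This yields a term $C\|h_x\|^{3/2}\|h_{xx}\|^{3/2}$, which Young's inequality absorbs into $\alpha_1 B\|h_{xx}\|^2$ at the cost of $C\|h_x\|^6$.

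That cost is superlinear, and this is the \emph{main obstacle}: Gronwall alone gives only a local bound. To close the estimate we exploit the structure. The cubic quantity $\int\sigma_i|h_x|h_{xx}$ should be rewritten using $|h_x|h_{xx}=\frac12(|h_x|h_x)_x$ together with the antisymmetry and commutator properties of the Hilbert transform; in Fourier variables $\int \mathcal H(f)\,f\,f_x$-type expressions have extra cancellation. We also use the weighted degenerate term $\int|h_x|h_{xx}^2$, which controls $\|(|h_x|h_x)_x\|^2_{L^1}$-type quantities. If this fails, the fallback is the approach indicated in the introduction (Lemma~\ref{lm3.2}): test with the reciprocally weighted time derivative $h_t/(|h_x|+B)$. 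This gives the free-energy identity
\[
\frac{d}{dt}\int\Big(\frac{\alpha_1}{2}h_x^2+\alpha_3 h+\frac{\alpha_2}{2}\sigma_i h\Big)+\int\frac{h_t^2}{|h_x|+B}=0 .
\]
Here $\int\sigma_i h=c\|h\|_{\dot H^{1/2}}^2\ge0$ by the Fourier sign of the Hilbert multiplier. The non-coercive linear term $\alpha_3\int h$ is handled through $\frac{d}{dt}\int h=\int h_t\le (\int \frac{h_t^2}{|h_x|+B})^{1/2}(\int(|h_x|+B))^{1/2}$, which is bounded by a small multiple of the dissipation plus $C(1+\|h_x\|^2)$. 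Gronwall then gives $h\in L^\infty(0,T_e;H^1)$ and $\int h_t^2/(|h_x|+B)\in L^1(0,T_e)$. Hölder then yields $h_t\in L^{3/2}(Q_{T_e})$ once $h_x\in L^3$. Afterwards the $-h_{xx}$ test becomes linear in $\|h_{xx}\|^2$ and gives $h_x\in L^2(H^1)$. Interpolating $L^\infty L^2\cap L^2H^1$ gives $h_x\in L^3(Q_{T_e})$, hence $|h_x|h_x\in L^{3/2}$ with derivative $2|h_x|h_{xx}\in L^{3/2}$.

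\textbf{Passage to the limit and uniqueness.} With bounds on $h$ in $L^\infty H^1\cap L^2H^2$ and on $h_t$ in $L^{3/2}$, the Aubin--Lions lemma gives $h_x^\varepsilon\to h_x$ strongly in $L^2(Q_{T_e})$, hence in $L^p$ for $p<3$. This lets us pass to the limit in the nonlinearities $|h_x|h_x$ and $\sigma_i|h_x|$, since the Hilbert transform is continuous on $L^p$, and we obtain \eqref{1.7}. For uniqueness, take two solutions, set $w=h^1-h^2$, and test with $w$. Monotonicity of $y\mapsto |y|y+2By$ gives $\ge B\|w_x\|^2$. The difference of the nonlocal terms is
\[
\alpha_2\mathcal H(w_x)(|h^1_x|+B)+\alpha_2\sigma_i^2(|h^1_x|-|h^2_x|),
\]
which is bounded by $C(\|h^1_x\|_{L^\infty}+\|\sigma^2_i\|_{L^\infty}+1)\|w_x\|\,\|w\|$. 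These $L^\infty$ norms lie in $L^2(0,T_e)$ via $H^1\subset L^\infty$ and the BMO/Sobolev bound for $\mathcal H$ in one dimension, so Gronwall gives $w\equiv0$.
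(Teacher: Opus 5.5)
Your proof is correct, and its core is the paper's own. The estimate that actually closes is your ``fallback'': test with $h_t/(|h_x|+B)$, equivalently with $w=-\alpha_1h_{xx}+\alpha_2\sigma_i+\alpha_3$, since $h_t=-(|h_x|+B)w$. This works because $h\mapsto\sigma_i$ has the nonnegative symbol $\pi K\beta|\xi|$, so $\int\sigma_i h_t=\frac12\frac{d}{dt}\int\sigma_i h$ with $\int\sigma_i h\ge0$, and because $\alpha_3\int h$ is controlled by Cauchy--Schwarz against the weighted dissipation. This is exactly Lemmas~\ref{lm3.1}--\ref{lm3.3}, and your uniqueness argument is the paper's as well. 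The speculative commutator route for the $-h_{xx}$ test is unproven and not needed, so drop it.

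Where you differ is the surrounding machinery.
\begin{itemize}
\item The paper builds local solutions by freezing the nonlocal lower-order term and iterating (Theorem~\ref{thm1.1A1}), then continues them using the a priori bounds. You run a Fourier--Galerkin scheme with bounds uniform on all of $[0,T_e]$ and pass to the limit once with Aubin--Lions. This avoids the continuation step and derives the energy identity for smooth approximants rather than formally for weak solutions.
\item For the linear term you use $F'+\frac12 D\le C(1+F)$ with $F=\frac{\alpha_1}{2}\|h_x\|^2+\frac{\alpha_2}{2}\int\sigma_i h$, plus Gronwall. The paper instead integrates in time, splits cases on the sign of $\int h$, and absorbs $\varepsilon\sup\|h_x\|^2$ (Lemma~\ref{lm3.2}). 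Your version is shorter and independent of the sign of $\alpha_3$, whereas the paper's case split tacitly needs $\alpha_3\ge0$. Both bounds depend on $T_e$.
\item You obtain $h_x\in L^2(0,T_e;H^1)$ by rerunning the $-h_{xx}$ test once $\|h_x\|$ is bounded, using Gagliardo--Nirenberg and Young. The paper reads $\int\!\!\int|h_{xx}|^2(|h_x|+B)$ off the dissipation via $h_t^2/(|h_x|+B)=(\alpha_1h_{xx}-\alpha_2\sigma_i-\alpha_3)^2(|h_x|+B)$. To avoid circularity, its bound on $\int\!\!\int\sigma_i^2|h_x|$ there actually needs your interpolation $\|h_x\|_{L^4}^2\le C\|h_x\|^{3/2}\|h_{xx}\|^{1/2}$.
\end{itemize}

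Three small adjustments are needed in your scheme.
\begin{itemize}
\item In the Galerkin space $V_N$, $h^N_t/(|h^N_x|+B)$ is not an admissible test function. Test instead with $w^N=-\alpha_1h^N_{xx}+\alpha_2\sigma^N_i+\alpha_3\in V_N$. This gives $\frac{d}{dt}E(h^N)+\int(|h^N_x|+B)|w^N|^2=0$ and $\frac{d}{dt}\int h^N=-\int(|h^N_x|+B)w^N$, so your argument runs unchanged with this dissipation. The $L^{3/2}$ bound on $h^N_t=-P_N[(|h^N_x|+B)w^N]$ then uses the uniform $L^{3/2}$-boundedness of $P_N$.
\item Any cutoff of the Hilbert operator should be a Fourier truncation, because a physical-space truncation of the kernel can destroy the sign of the symbol. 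With $B>0$ neither that cutoff nor the $\sqrt{h_x^2+\varepsilon^2}$ regularization is actually needed.
\item In the uniqueness step, $\|\sigma^2_i\|_{L^\infty}\le C\|h^2_x\|_{H^1}$ holds simply because $\mathcal H$ is a unimodular multiplier on $H^1$; BMO plays no role.
\end{itemize}
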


We are also interested in the limit as $B\to 0$.
\begin{Definition}\label{def1.1a}
 Let $h_{0}\in L^{1}(\Omega)$. A function $h$ with
\begin{eqnarray}
 h\in  L^{2}(0,T_{e};H^{1}_{{\rm per}}(\Omega))
 \label{1.10B}
\end{eqnarray}
is called a weak solution to  problem \eq{1.1} -- \eq{1.3} with $B=0$, if for
all $\varphi\in  C_{c,x{\text-}per}^{\infty}([0,T_{e})\times\Omega)$, there holds
\begin{eqnarray}
 (h,\varphi_{t})_{Q_{T_{e}}} - \frac{\alpha_{1}}{2} ( |h_{x}|h_{x} ,\varphi_{x} )_{Q_{T_{e}}}
 - ((\alpha_{2}\sigma_i + \alpha_{3} ) |h_{x}|, \varphi) _{Q_{T_{e}}}
 +  (h_{0},\varphi(0))_{\Omega}=0.
 \label{1.11B}
\end{eqnarray}

\end{Definition}

We denote a solution to  problem \eq{1.1} -- \eq{1.3} by $h_B$, then $h_B$
converges $h$ almost everywhere $(t,x)$ over $Q_{T_e}$, and $h$ satisfies \eq{1.11B}.
\begin{Theorem}\label{thm1.1B} Assume that
$h_{0}\in H^{1}_{{\rm per}}(\Omega) $. Then there exists a weak solution $h$ to
 problem \eqref{1.1} -- \eqref{1.3} with $B=0$, which in addition
to \eqref{1.10B}, satisfies
\begin{eqnarray}
 h\in L^{\infty}(0,T_{e};H^{1}_{{\rm per}}(\Omega)),\ \ h_{x}\in  L^{{3}}(Q_{T_{e}}),
 \label{1.12aB} \\
 h_{t}\in L^{\frac{4}{3}}(Q_{T_{e}}),\ \
 |h_{x}|h_{x} \in L^{\frac{4}{3}}(0,{T_{e}}; W^{1,\frac{4}{3}}_{{\rm per}}(\Omega)),\\
 (|h_{x}|h_{x})_t \in L^{1}(0,{T_{e}}; H^{-2}_{{\rm per}}(\Omega)).
 \label{1.12B}
\end{eqnarray}

\end{Theorem}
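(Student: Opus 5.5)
Theorem~\ref{thm1.1B} will follow from Theorem~\ref{thm1.1} by letting $B\to0^+$ in the family of weak solutions $h_B$. The work is in obtaining a priori estimates uniform in $B\in(0,1]$, followed by a compactness argument strong enough to pass to the limit in the two nonlinear terms $|h_x|h_x$ and $\sigma_i|h_x|$.

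\textbf{Step 1 (uniform energy estimate).} I would test \eqref{1.5} with $h_t/(|h_x|+B)$. This is the reciprocal-weighted estimate referred to in the introduction (Lemma~\ref{lm3.2}). Formally it gives
\[
\int_\Omega \frac{h_t^2}{|h_x|+B}\,dx+\frac{d}{dt}\int_\Omega\Big(\frac{\alpha_1}{2}h_x^2+\alpha_3 h+\frac{\alpha_2}{2}\sigma_i h\Big)dx=0 .
\]
The $\sigma_i$ term is handled by the Fourier representation of the Hilbert transform: $\int\sigma_i h\,dx$ is a nonnegative multiple of $\|\,|\xi|^{1/2}\hat h\,\|^2$, so it has a sign. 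The term $\alpha_3\int h\,dx$ is not bounded below. To control it, write $\frac{d}{dt}\int h=\int h_t$ and bound this by Cauchy--Schwarz:
\[
\Big|\int h_t\Big|\le \Big(\int \frac{h_t^2}{|h_x|+B}\Big)^{1/2}\Big(\int(|h_x|+B)\Big)^{1/2}.
\]
Young's inequality then absorbs the first factor into the dissipation, leaving $C(1+\|h_x\|_{L^2})$. Gronwall's inequality gives $h_x\in L^\infty(0,T_e;L^2)$ uniformly in $B$. Together with the mean of $h$, which is controlled by the same bound on $\int h_t$, this yields $h\in L^\infty(0,T_e;H^1_{per})$ and $h_t/(|h_x|+B)^{1/2}\in L^2(Q_{T_e})$.

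\textbf{Step 2 (higher integrability).} Next I would test \eqref{1.5} with $-h_{xx}$. This gives $\int |h_x|h_{xx}^2$ plus $B\int h_{xx}^2$, which is $B$-dependent and useless here. Since $(|h_x|h_x)_x=2|h_x|h_{xx}$, the $B$-independent quantity $\int |h_x|h_{xx}^2$ controls $\|(|h_x|^{1/2}h_x)_x\|_{L^2}^2$. The lower-order term $\int(\alpha_2\sigma_i+\alpha_3)(|h_x|+B)h_{xx}$ is estimated with the $L^p$-boundedness of the Hilbert transform (Stein) and interpolation. The aim is $|h_x|^{1/2}h_x\in L^2(0,T_e;H^1)$, hence $h_x\in L^3(Q_{T_e})$ uniformly in $B$. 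Hölder's inequality then gives $h_t=(h_t/(|h_x|+B)^{1/2})(|h_x|+B)^{1/2}\in L^{4/3}(Q_{T_e})$, since $(|h_x|+B)^{1/2}\in L^6$ and $1/2+1/6\cdot$ applied with $L^2\cdot L^4$ gives exponent $4/3$. From the equation, $(|h_x|h_x)_x\in L^{4/3}$. Finally, $(|h_x|h_x)_t=2|h_x|h_{xt}=2|h_x|(h_t)_x$, which is bounded in $L^1(0,T_e;H^{-2})$ by duality: pair with $\phi\in H^2\subset W^{1,\infty}$, integrate by parts, and use the $L^{4/3}$ bounds.

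\textbf{Step 3 (passage to the limit), the main obstacle.} Weak limits alone do not identify $|h_x|h_x$, so I need strong convergence of $h_x$. Set $w_B=|h_{B,x}|h_{B,x}$. It is bounded in $L^{4/3}(0,T_e;W^{1,4/3})$, and $\partial_t w_B$ is bounded in $L^1(0,T_e;H^{-2})$. The Aubin--Lions--Simon lemma with $W^{1,4/3}\Subset L^{4/3}\subset H^{-2}$ then gives $w_B\to w$ strongly in $L^{4/3}(Q_{T_e})$, hence a.e. along a subsequence. Because $y\mapsto|y|y$ is a homeomorphism, $h_{B,x}\to h_x$ a.e. The $L^3$ bound and Vitali's theorem upgrade this to strong convergence in $L^p(Q_{T_e})$ for every $p<3$. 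Likewise, $h_B\to h$ strongly by Aubin--Lions using the $h_t$ bound. Then $\sigma_i[h_B]\to\sigma_i[h]$ in $L^2$ by boundedness of the Hilbert transform, and the products $\sigma_i|h_x|$ and $|h_x|h_x$ converge in $L^1$. The $B$-terms vanish since $B\|h_{B,x}\|_{L^1}\to0$, and passing to the limit in \eqref{1.7} yields \eqref{1.11B}. The delicate points are the $B$-uniformity of Step 2 and the Hilbert-transform estimate needed to close it; if the $-h_{xx}$ test does not close uniformly, I would instead test with $-(|h_x|h_x)_x$.
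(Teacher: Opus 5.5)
Your proposal is correct and follows the route the paper intends. The paper only asserts after Definition~\ref{def1.1a} that $h_B\to h$, relying on the $B$-uniform reciprocal-weighted estimates of Section~\ref{sec3} and on the Aubin--Lions compactness of $|h_x|h_x$ implicit in the regularity class of Theorem~\ref{thm1.1B}. Your Step~2 does close uniformly in $B$, which is more careful than \eqref{3.16}, whose bound on $\int_0^t\|h_{xx}\|^2d\tau$ is not uniform in $B$. With $w=|h_x|^{1/2}h_x$ and $\|h_x\|$ bounded one has $\int_\Omega\sigma_i^2|h_x|\,dx\le C\|h_x\|_{L^4(\Omega)}^2\le C(1+\|w_x\|^{2/5})$, and this is absorbed by $\int_\Omega|h_x|h_{xx}^2\,dx=\frac{4}{9}\|w_x\|^2$.

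Two small repairs are needed. First, the term $\int_\Omega h_t\,{\rm sgn}(h_x)h_{xx}\phi\,dx$ in your $H^{-2}$ bound cannot be controlled by $L^{4/3}$ bounds; use the product of the weighted $L^2(Q_{T_e})$ bounds on $h_t(|h_x|+B)^{-1/2}$ and $(|h_x|+B)^{1/2}h_{xx}$. Second, a uniform $L^1(0,T_e;H^{-2}_{\rm per})$ bound on $(|h_{B,x}|h_{B,x})_t$ passes to the limit only as a measure in time. The last property in Theorem~\ref{thm1.1B} for $h$ itself therefore needs an extra equi-integrability or chain-rule argument, which the paper does not supply either.
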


We also investigate the existence and asymptotic stability of the stationary solution for the case that
$B$ is positive. First we give
its definition.
\begin{Definition}\label{def1.3} A function $\hat h(x)$ which satisfies
\begin{eqnarray}
  \frac{\alpha_{1}}{2} ( |\hat h_{x}|\hat h_{x} + 2B \hat h_{x},\varphi_{x} )_{\Omega}
  + ((\alpha_{2}\hat \sigma_i + \alpha_{3} )(|\hat h_{x}| + B), \varphi) _{\Omega}
  &=& 0,\  \forall \varphi\in C^\infty_{\rm per}(\Omega), \label{1.13a} \\
  \int_\Omega\hat h(x)dx
  &=& \alpha
  \label{1.13}
\end{eqnarray}
is called a stationary solution to problem  \eqref{1.1} -- \eqref{1.3},
where $\alpha$ is an arbitrarily given constant
 and $\hat \sigma_i = {\rm P.V.}\int_{-\infty}^{\infty}\frac{K\beta \hat h_{x}(x_{1})}{x-x_{1}}dx_{1}$.

\end{Definition}

It is easy to show that such defined weak solution  exists, however its uniqueness needs more restriction of some parameters.
  some estimates on $\hat h$ are derived, and $\hat h(x)$ is, under small perturbation, asymptotically
  stable, which are listed below in Theorem~\ref{thm1.3}.
\begin{Theorem}[Existence]\label{thm1.3} Assume that $B>0$. Then there exists a weak solution $\hat h$ in the sense of
Definition~\ref{def1.3} to problem \eqref{1.1} -- \eqref{1.3}.
Moreover if $\gamma$  is suitably greater than  $ C_1 N$ where $ C_1 $ is a positive constant and
$ N :=\max\{\tau,b,\Psi\}$, then the solution $\hat h$  is unique and the strength of is
small in the sense that there exists a  small number $\delta$ such that
\begin{eqnarray}
   \| \hat h_{xx} \|  +  \| \hat h_{x}\|_{L^\infty(\Omega)} \le  \delta.
 \label{1.14}
\end{eqnarray}

\noindent{\bf (Asymptotical stability)}  For the case that $B>0$,
 assume that there exists a sufficiently small constant $\varepsilon$
   such that $\|h_0-\hat h\|_{H^1(\Omega)}\le \varepsilon$,  and $\gamma> C_1 N$.
Then there exists a unique solution $h$   to problem  \eqref{1.1} -- \eqref{1.3}, such that
$h-\hat h\in L^\infty(0,\infty;H^1_{\rm per}(\Omega))$,  $h_x - \hat h_x\in L^2(0,\infty;H^2_{\rm per}(\Omega))$, and
\begin{eqnarray}
  \sup_{c\le x \le d} |h(t,x) - \hat h (x) - (\bar h - \bar {\hat h}) |    \to 0,\ {\rm as}\  t\to \infty,
 \label{1.15}
\end{eqnarray}
here $\bar  h - \bar {\hat h}  = \frac{1}{|\Omega|}\int_{\Omega}h(t,x) - \hat h (x)dx
= \frac{1}{|\Omega|}\int_{\Omega}h(t,x)dx - \frac{1}{|\Omega|}\alpha$ with the same $\alpha$ as in
\eq{1.13}, and $|\Omega|$ denotes the measure of $ \Omega $.

\end{Theorem}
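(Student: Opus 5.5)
We will treat the stationary problem and the stability statement separately, working throughout with $w=\hat h_x$, periodic with zero mean (since $\hat h$ is periodic). The Hilbert-type term $\hat\sigma_i$ is a bounded operator on $L^2_{\rm per}$ and on $H^s_{\rm per}$. This follows from the Fourier-multiplier representation quoted before Lemma~\ref{lm3.2}: the symbol is $-i\pi K\beta\,{\rm sgn}(k)$. Hence $\|\hat\sigma_i\|_{H^s}\le C\|w\|_{H^s}$.

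\textbf{Existence.} We run a Galerkin or Leray--Schauder argument on the mean-zero subspace of $H^1_{\rm per}$.
\begin{itemize}
\item Fix $v$ with $\|v_x\|\le R$ and solve the quasilinear elliptic problem
\[
\tfrac{\alpha_1}{2}(|u_x|u_x+2Bu_x)_x=(\alpha_2\sigma_i[v]+\alpha_3)(|v_x|+B)-\lambda .
\]
The principal part is monotone and coercive because $B>0$. The constant $\lambda$ is chosen so that the right side has zero mean, which is the periodic solvability condition.
\item This $\lambda$ is the Lagrange multiplier accompanying the mean constraint \eqref{1.13}. Alternatively, one observes that testing \eqref{1.13a} with $\varphi=1$ gives $\int(\alpha_2\hat\sigma_i+\alpha_3)(|\hat h_x|+B)=0$. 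That compatibility must then be produced by the fixed point, e.g.\ by allowing the constant shift in the equation and showing $\lambda=0$ at the fixed point via a degree/continuation argument.
\item The a priori bound comes from testing with $u$: $\alpha_1\|u_x\|_{L^3}^3+\alpha_1 B\|u_x\|^2\le C(\|v_x\|+1)\|u\|_{L^\infty}$. We use Poincaré--Sobolev on mean-zero $u$ and the cubic coercivity to close a ball. Compactness of $H^1\hookrightarrow C$ then gives Schauder's theorem.
\item The constant $\alpha$ is attained by adding a constant, since the equation only involves $\hat h_x$.
\end{itemize}

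\textbf{Uniqueness and smallness.} Differentiating formally (the regularity is justified by difference quotients) gives $\alpha_1(|w|+B)w_x=(\alpha_2\hat\sigma_i+\alpha_3)(|w|+B)$. Since $|w|+B\ge B>0$, we may divide:
\[
\alpha_1 w_x=\alpha_2\hat\sigma_i+\alpha_3 .
\]
This is the key simplification. Integrating over a period, and using that $w$ and $\hat\sigma_i$ have zero mean, forces $\alpha_3$ to be balanced. Multiplying by $w_x$ then gives
\[
\alpha_1\|w_x\|^2\le C(\alpha_2\|w\|+|\alpha_3|)\|w_x\|,
\]
so by Poincaré $\|w_x\|\le CN/\gamma$. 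This is small when $\gamma\ge C_1N$ with $C_1$ large, and Sobolev embedding then gives \eqref{1.14}.

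Uniqueness follows because the difference $z$ of two solutions satisfies $\alpha_1 z_x=\alpha_2\sigma_i[z]$. Testing with $z_x$ and using the operator norm, $\alpha_1\|z_x\|\le C\alpha_2\|z\|\le C\alpha_2\|z_x\|$, so $z=0$ once $\gamma H> C b$.

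\textbf{Stability.} Set $u=h-\hat h-(\bar h-\bar{\hat h})$, which has mean zero. We write the equation for $v=u_x$, obtained by differentiating \eqref{1.5} in $x$ and subtracting the stationary equation. The linearization about $\hat h$ reads $v_t=\alpha_1\big((|\hat h_x|+B)v_x\big)_x+\dots$. Its dissipative part $-\alpha_1 B\|v_x\|^2$ dominates, and everything else is a perturbation:
\begin{itemize}
\item terms with coefficients $\hat h_x,\hat h_{xx}$, bounded by $\delta$ from \eqref{1.14};
\item nonlocal terms $\alpha_2\sigma_i[v](\cdot)$ times bounded factors, with the $L^2$ bound $C\alpha_2\|v\|$, where $\alpha_2/\alpha_1$ is small by $\gamma>C_1N$;
\item the term $\alpha_3\,{\rm sgn}$-differences.
\end{itemize}

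Testing with $v$ (and, for the $H^1$ level, with $-v_{xx}$) gives
\[
\frac{d}{dt}\|v\|_{H^1}^2+c\|v_x\|_{H^1}^2\le C(\delta+N/\gamma)\|v_x\|_{H^1}^2+C\|v\|_{H^1}\|v_x\|_{H^1}^2 .
\]
A continuity argument with $\|v(0)\|\le\varepsilon$ yields global bounds and exponential decay via Poincaré. Hence $\|u\|_{L^\infty}\le C\|u_x\|\to0$, which gives \eqref{1.15}. The uniqueness and $L^2(0,\infty;H^2)$ claims follow from the same estimates and Theorem~\ref{thm1.1}.

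\textbf{Main obstacle.} The hardest part is the nonsmooth term $|h_x|-|\hat h_x|$ in the differentiated equation, whose derivative involves ${\rm sgn}$. The first thing to try is to avoid differentiating it: work at the level $u$ and $u_t$, use the Lipschitz bound $||a|-|b||\le|a-b|$, and exploit the ellipticity structure $\frac{\alpha_1}{2}\big((|h_x|h_x-|\hat h_x|\hat h_x)+2Bv\big)_x$. This structure is monotone, with
\[
(|a|a-|b|b)(a-b)\ge \tfrac12(|a|+|b|)(a-b)^2 .
\]
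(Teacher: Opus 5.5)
\textbf{There is a genuine gap: the existence step fails, and it cannot be repaired when $\alpha_3=\tau b+\Psi H\neq0$.} You need the multiplier $\lambda$ to vanish at the fixed point, and no degree or continuation argument can give this, because your own reduction rules out any solution. Set $\Phi(y)=\frac{\alpha_1}{2}(|y|y+2By)$. Then $\Phi'(y)=\alpha_1(|y|+B)\ge\alpha_1 B$, so $\Phi^{-1}$ is Lipschitz. Condition \eqref{1.13a} says $(\Phi(\hat h_x))_x\in L^1$, so $\hat h_x\in W^{1,1}_{\rm per}(\Omega)$, and dividing by $|\hat h_x|+B$ gives
\[
\alpha_1\hat h_{xx}=\alpha_2\hat\sigma_i+\alpha_3\quad\text{a.e. in }\Omega .
\]
Integrate over a period. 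Periodicity gives $\int_\Omega\hat h_{xx}\,dx=0$, and $\int_\Omega\hat\sigma_i\,dx=0$ because the multiplier vanishes at $k=0$. Hence $\alpha_3|\Omega|=0$. Your phrase ``forces $\alpha_3$ to be balanced'' therefore means $\alpha_3=0$: for $\alpha_3\neq0$ there is no stationary solution in the sense of Definition~\ref{def1.3} at all.

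The same computation at a fixed point of your $\lambda$-problem gives $\lambda\int_\Omega(|u_x|+B)^{-1}dx=\alpha_3|\Omega|$. So $\lambda$ has the sign of $\alpha_3$ and never vanishes. What your scheme actually produces is a travelling profile $h=u(x)-\lambda t$ (the flat front with $\lambda=\alpha_3B$ is one), not a solution of \eqref{1.13a}. The paper does not resolve this either. It freezes $\sigma_i$ in \eqref{4.2b}, calls the result a usual quasilinear problem and omits the details. That periodic problem carries the same unverified solvability condition $\int_\Omega(\alpha_2\sigma_i+\alpha_3)(|h_x|+B)\,dx=0$. The existence claim needs the extra hypothesis $\alpha_3=0$, or a reformulation in terms of travelling waves.

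When $\alpha_3=0$, your identity in Fourier variables ($\xi_k=2\pi k/L$) reads $(\alpha_1\xi_k^2+\pi K\beta\alpha_2|\xi_k|)a_k=0$, so $\hat h\equiv\alpha/|\Omega|$. Uniqueness then holds for every $\gamma$, \eqref{1.14} holds with $\delta=0$, and your bound $\|w_x\|\le CN/\gamma$ carries no information.

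The same identity also removes what you call the main obstacle of the stability part. Substituting $\alpha_3=\alpha_1\hat h_{xx}-\alpha_2\hat\sigma_i$ into \eqref{3.1} shows that $u=h-\hat h$ satisfies exactly
\[
u_t=(\alpha_1u_{xx}-\alpha_2\tilde\sigma_i)(|h_x|+B),
\]
with no ${\rm sgn}$-difference terms. The paper tests this with $\alpha_1u_{xx}-\alpha_2\tilde\sigma_i$ in \eqref{4.8a}--\eqref{4.12}. Using $\int_\Omega\tilde\sigma_iu_t\,dx=G'(t)$ with $G\ge0$, and $-\int_\Omega u_{xx}\tilde\sigma_i\,dx\ge0$, this gives a dissipation identity for $\frac{\alpha_1}{2}\|u_x\|^2+\alpha_2G(t)$ without differentiating the equation in $x$. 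Your linearize-then-differentiate scheme creates the ${\rm sgn}(h_x)h_{xx}$ terms that it then has to fight.
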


\vskip0.5cm
\noindent{\bf Notations.}
$C$ denotes a  universal constant  which
may vary from line to line.  and $C(\cdot)$ depends on its argument(s).
Greek letters $\varepsilon,\ \zeta$ are small positive numbers which are normally
 assumed to be small.
${T_e}$ (or $t_e$) denotes a positive constant related to time, the life of a solution.

 Let $p,\ q$ be  real numbers such that $p,\ q\ge 1$.
 Let $\cN$ be the set of natural number and $\cN_+ = \cN\cup \{0\}$, and $\cR^d$ be $d$-dimensional
 Euclidean space.

 $\Omega$ denotes an open, bounded, simple-connected domain in $\cR^d$ with natural number
 $d$, with smooth boundary $\partial \Omega$. It represents
the material points of a solid body. $Q_t=(0,t)\times
\Omega$, and its parabolic boundary ${\cal P}Q_t$ is defined by
$ {\cal P}Q_t:=(\partial\Omega\times [0,t))\cup(\Omega\times\{0\})$.

 $L^p(\Omega)$ are the Sobolev spaces of $p$-integrable
real functions over $\Omega$ endowed with the norm
$$
 \|f\|_{L^p(\Omega)} = \left(\int_\Omega|f(x)|^pdx\right)^\frac1p, \mbox{ if }p<\infty;
 \ \|f\|_{L^\infty(\Omega)} = \mbox{ ess} \sup_{x\in\Omega}|f(x)|.
$$
 Throughout this article, the norm of $L^2(\Omega)$ is denoted by $\|\cdot\|$,
 and the norm of $L^2(Q_{T_{e}} )$ is denoted by $\| \cdot \|_{Q_{T_{e}}}$.

Let $\Omega$ be an $n$-dimensional cuboid. Let $\alpha\in \cN_0^d$  be a multi-index
and $|\alpha|$ be its length, where  $\cN_0=\cN\cup\{0\}$.
$D^\alpha f$ is the $|\alpha|$-th order weak derivatives. Define the space
 $W^{m,p}_{{\rm per}}(\Omega)=\{ f\in L^p(\Omega)\mid D^\alpha f\in L^p(\Omega) \mbox{ for all } \alpha
\mbox{ such that }
|\alpha|\le m,  \mbox{ and } \gamma_jf|_{{\rm on\ one\ face}} = (-1)^j \gamma_jf|_{{\rm on\ the\ corresponding\ face}},
\mbox{ for } j=0,1,\cdots,m-1\}$ endowed with norm
$$
 \|f\|_{W^{m,p}_{{\rm per}}} = \left( \sum_{|\alpha|\le m} \|D^\alpha f\|_{L^p(\Omega)}^p \right)^\frac1p,
$$
where $\gamma_j$ are the trace operators.

 $C_{c,x{\text-}per}^{\infty}([0,T_{e})\times\Omega)$ denotes the space of all
functions with compact support which have infinitely many order derivatives and satisfy space-periodic boundary conditions.
 And $W_0^{m,p}(\Omega)$ is the closure of $C_0^\infty(\Omega)$ in   the norm $\|\cdot\|_{W^{m,p}(\Omega)}$.
For $p=2$, $H^m_{{\rm per}} (\Omega):= W^{m,2}_{{\rm per}}(\Omega)$,
 $H^{-m}_{{\rm per}} (\Omega)$ denotes the dual space of $H^m_{{\rm per}} (\Omega)$.
$H^m _0(\Omega):= W^{m,2}_0(\Omega)$.

Let $q,p\in \cR$ such that $q,p\ge 1$.
\begin{eqnarray}
 L^q(0,T;L^{p}(\Omega))
  &:=&\Big\{f \mid f \mbox{ is Lebesgue measurable such that }  \non\\
  && \|f\|_{L^q(0,t;L^p(\Omega))} := \left(\int_0^t\left(\int_\Omega
  |f|^pdx\right)^{\frac{q}p}d\tau \right)^\frac1q<\infty \Big\},
 \non
\end{eqnarray}
and
$L^q(0,t;W^{m,p}_{{\rm per}}(\Omega)):=\{f\in L^q(0,t; L^{p}(\Omega))
\mid \int_0^t \|f(\cdot,\tau)\|_{W^{m,p}_{{\rm per}}} ^q d\tau<\infty \}$. See, e.g., \cite{Lions}.
 We also need some function spaces: For non-negative integers  ${m},  {n}$,
real number $\alpha\in(0,1)$ we denote by $C^{m+\alpha}(\overline{\Omega})$ the
space of \textit{m}-times differentiable functions on $\overline{\Omega}$, whose
$m$th derivative is H\"{o}lder continuous with exponent $\alpha$. The space
$C^{\alpha,\frac{\alpha}{2}}(\overline{Q}_{T_{e}})$ consists of all functions
on $\overline{Q}_{T_{e}}$, which are H\"{o}lder continuous in the parabolic distance
$$
 d((t,x),(s,y)):=\sqrt{|t-s|+|x-y|^{2}}
$$
$C^{m,n}(\overline{Q}_{T_{e}})$ and $C^{m+\alpha,n+\frac{\alpha}{2}}(\overline{Q}_{T_{e}})$,
respectively, are the spaces of functions, whose ${x}$-derivatives up to
order ${m}$ and ${t}$-derivatives up to order \textit{n} belong to
$C(\overline{Q}_{T_{e}})$ or to $C^{\alpha,\frac{\alpha}{2}}(\overline{Q}_{T_{e}})$,
respectively.

\vskip0.2cm
\noindent{\bf Organization of rest of this article.} The main results of this article are
Theorem~\ref{thm1.1}  and Theorem~\ref{thm1.3}.
 The remaining sections  are devoted to the proofs of these theorems.
In Section~2, we prove by employing the method of continuation of local solutions. To this end
we first construct an approximate initial-boundary value problem and prove local-in-time
 existence of weak solutions in suitable spaces to this problem.
and then we derive in Section~3 {\it a priori} estimates which are uniform in time $T$,
 in which we use $L^p$-estimate, Fourier transform of singular integrals, reciprocal weighted
 estimate of $h_t$ from which we find for $\int_\Omega h dx$ a bound in terms of $\varepsilon \| h_x\|_{L^2( \Omega)} $
  where $\varepsilon$ is a positive constant which can chosen arbitrarily small.
 In Section~4,  we prove the existence and uniqueness of stationary solution and show that the solution to the original
  initial-boundary  value problem  converges asympotically to this steady solution. Section~5 is
devoted to the comparison  of the asymptotic state of weak solutions with numerical solutions.

%

\section{Construction of local solutions}
\label{sec2}

To prove Theorem~\ref{thm1.1}, we employ the method of continuation of
local solutions and as first step we investigate the existence of local weak
solutions in
$$
 W:=\{h\mid h\in L^{\infty}(0,T_{e};H^{1}_{{\rm per}}(\Omega)),\  h_{x}\in L^{2}(0,T_{e};H^{1}_{{\rm per}}(\Omega))
 \cap L^{{3}}(Q_{T_{e}})\},
$$
 to problem \eqref{1.1} -- \eqref{1.3}.
 For $\hat h\in W_M:=\{h\in W \mid  \|h\|_{H^1(\Omega)}\le M\}$
 where $M\ge \sqrt{2} \|h_0\|_{H^1(\Omega)}$,
 we consider the following approximate initial-boundary value  problem:
\begin{eqnarray}
   h_{t} -   \alpha_{1}}  ( |{h_{x}|  +  B)  h_{xx}
 &= & \hat {\cal F},\ {\rm in}\ Q_{T_e},\label{2.1}\\
 \hat {\cal F} &:=& - (\alpha_{2}\hat\sigma_i  + \alpha_{3} )(|\hat h_{x}|  + B),
 \label{2.1a}\\
 h |_{x=c} &=& h |_{x=d},\quad h_{x} |_{x=c} = h_{x} |_{x=d}, \ {\rm on}\  \partial\Omega\times [0,T_{e}],\ \
 \label{2.2}\\
 h(0,x)  &=& h_{0}(x), \ {\rm in}\  \Omega.
 \label{2.3}
\end{eqnarray}

It is easy to show that the above problem has a solution $h\in W$, we thus  define an
operator $\hat h\mapsto {\cal T}\hat h =: h$, ${\cal T}: W_M\to W_M$, and by iteration we construct a
sequence of solutions $h^n$ with $n=1,2,3\cdots$.
We are going to derive {\it a priori} estimates for these approximate solution sequence,
 and conclude   the existence of local weak solutions. We have
\begin{Theorem}\label{thm1.1A1} Assume that
$h_{0}\in H^{1}_{{\rm per}}(\Omega)$. Then there exists a positive constant $t_0$, such that
 weak solution $h$ over $Q_{t_0}$ to
 problem \eqref{1.1} -- \eqref{1.3}, which in addition
to \eqref{1.10B}, satisfies
\begin{eqnarray}
 h\in L^{\infty}(0,t_{0};H^{1}_{{\rm per}}(\Omega)),\ \ h_{x}\in  L^{{3}}(Q_{t_0}),
 \label{4.4a} \\
 h_{t}\in L^{\frac{3}{2}}(Q_{t_0}),\ \
 |h_{x}|h_{x} \in L^{\frac{3}{2}}(0,{t_0}; W^{1,\frac{3}{2}}_{{\rm per}}(\Omega)),\label{4.4b}\\
 (h_{x})_t \in L^{\frac{3}{2}}(0,{t_0}; W^{-1,\frac{3}{2}}_{{\rm per}}(\Omega)).
 \label{4.4c}
\end{eqnarray}

\end{Theorem}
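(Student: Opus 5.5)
We prove Theorem~\ref{thm1.1A1} by a fixed-point/compactness argument built on the linearized problem \eqref{2.1}--\eqref{2.3}. Throughout we use the quasilinear diffusion $\alpha_1(|h_x|+B)h_{xx}=\frac{\alpha_1}{2}(|h_x|h_x+2Bh_x)_x$ with a frozen source $\hat{\cal F}$. If needed, we first regularize $|y|$ by $\sqrt{y^2+\zeta^2}$ and mollify $h_0$, and pass $\zeta\to0$ at the end.

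\emph{Step 1: control of the source.} For $\hat h\in W_M$ we estimate $\hat{\cal F}$. On the periodic cell the operator $\hat h_x\mapsto\hat\sigma_i$ is, up to the constant $K\beta\pi$, the Hilbert transform. Its Fourier multiplier is $-i\,{\rm sgn}(k)$, so Plancherel gives $\|\hat\sigma_i\|_{L^p(\Omega)}\le C_p\|\hat h_x\|_{L^p(\Omega)}$ for $1<p<\infty$ (Stein, M.~Riesz), with isometry at $p=2$. Hence by H\"older
\[
\|\hat{\cal F}\|_{L^{3/2}(\Omega)}\le C\big(\|\hat h_x\|_{L^3}^2+\|\hat h_x\|_{L^{3/2}}+1\big),
\]
and similarly $\|\hat{\cal F}\|_{L^1}\le C(\|\hat h_x\|^2+1)$.

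\emph{Step 2: a priori estimates for $h={\cal T}\hat h$.} Existence for the frozen problem follows from Galerkin approximation with the Fourier basis and monotonicity of $y\mapsto |y|y+2By$. We then derive the estimates.

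First, test \eqref{2.1} with $-h_{xx}$. This gives
\[
\tfrac12\tfrac{d}{dt}\|h_x\|^2+\alpha_1\int(|h_x|+B)h_{xx}^2\,dx=-\int\hat{\cal F}h_{xx}\,dx .
\]
The right-hand side is bounded by $\frac{\alpha_1}{2}\int(|h_x|+B)h_{xx}^2+\frac{1}{2\alpha_1 B}\|\hat{\cal F}\|^2$. Using $B>0$, $\|\hat{\cal F}\|^2\le C(\|\hat h_x\|_{L^4}^4+1)$, and Gagliardo--Nirenberg, $\|\hat h_x\|_{L^4}^4\le C\|\hat h_x\|^3\|\hat h_x\|_{H^1}+C$, the $L^2(0,t;H^1)$ part of the $W_M$-norm of $\hat h_x$ controls this term.

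Second, the quantity $\int|h_x|h_{xx}^2$ gives $(|h_x|^{1/2}h_x)_x\in L^2$. Combined with $L^\infty_tL^2_x$ this yields $h_x\in L^3(Q_t)$. Then $(|h_x|h_x)_x=2|h_x|h_{xx}=2|h_x|^{1/2}\cdot|h_x|^{1/2}h_{xx}$, which lies in $L^{3/2}$ by H\"older ($L^6\cdot L^2$). Reading $h_t$ off the equation gives $h_t\in L^{3/2}(Q_t)$. Differentiating in $x$ in the distributional sense gives $(h_x)_t\in L^{3/2}(0,t;W^{-1,3/2})$.

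Finally, $\int_\Omega h\,dx$ is controlled by integrating the equation, which leaves $\int\hat{\cal F}$, bounded by Step~1; on a short interval this is harmless. Gronwall's inequality then gives $\sup_{s\le t}\|h(s)\|_{H^1}^2\le\|h_0\|_{H^1}^2+C(M)t$. Choosing $t_0$ with $C(M)t_0\le\|h_0\|_{H^1}^2$, and recalling $M^2\ge2\|h_0\|_{H^1}^2$, we get ${\cal T}(W_M)\subset W_M$.

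\emph{Step 3: passage to a solution.} Iterate $h^{n+1}={\cal T}h^n$. The uniform bounds from Step~2, together with the Aubin--Lions lemma (using $h^n_t\in L^{3/2}$ and $h^n_x\in L^2H^1$), give $h^n_x\to h_x$ strongly in $L^2(Q_{t_0})$ and almost everywhere along a subsequence. Boundedness in $L^3$ then upgrades this to strong convergence in $L^p$ for $p<3$. As a result $|h^n_x|h^n_x\to|h_x|h_x$ in $L^1$, and by $L^2$-continuity of the Hilbert transform $\sigma_i^n\to\sigma_i$ in $L^2$, so the products $\sigma_i^{n}|h^{n}_x|$ converge in $L^1$. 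We can therefore pass to the limit in \eqref{1.7}, and the bounds \eqref{4.4a}--\eqref{4.4c} follow by weak lower semicontinuity.

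The main obstacle is that the map $\hat h\mapsto\hat{\cal F}$ is quadratic in $\hat h_x$ through $\hat\sigma_i|\hat h_x|$. Closing the $H^1$ bound in Step~2 within $W_M$ therefore needs the full strength of the parabolic gain $(|h_x|^{1/2}h_x)_x\in L^2$ and $B>0$. If the Gagliardo--Nirenberg bookkeeping fails to close, I would instead test \eqref{2.1} with $h_t$ (or with $h_t/(|h_x|+B)$, the reciprocal weight mentioned in the introduction) and use the energy $\int(\frac{|h_x|^3}{3}+Bh_x^2)$ to obtain the $L^\infty H^1$ control.
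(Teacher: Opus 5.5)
Your construction is the paper's: the frozen-source map ${\cal T}$ on $W_M$ built from \eqref{2.1}--\eqref{2.3}, the $H^1$ bound from testing with $-h_{xx}$ and absorbing with $B>0$, a short-time self-map, and Aubin--Lions compactness for the iterates. Steps 1--2 are essentially sound. In one respect they are better than the paper: your bound $\|\hat h_x\|_{L^4}^4\le C\|\hat h_x\|^3\|\hat h_x\|_{H^1}$ gives a gain $C(M,M')\,t^{1/2}$ that is uniform over $\hat h\in W_M$. The paper instead appeals to absolute continuity of $\int_0^t\|\hat h_x\|_{H^1}^2\,d\tau$, which is not uniform in $\hat h$. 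Your limit of $\sigma_i^n|h^n_x|$ via strong $L^2$ convergence of the Hilbert transforms is also cleaner than the paper's weak-$*$ argument. Two bookkeeping corrections are needed. First, the gain is $O(t^{1/2})$, not $O(t)$. Second, since $W_M$ must carry a bound $\int_0^{t_0}\|h_{xx}\|^2\,d\tau\le M'^2$ for your Step 2 to work, you have to check that ${\cal T}$ reproduces it. It does, because $\frac{\alpha_1 B}{2}\int_0^t\|h_{xx}\|^2\,d\tau\le\frac12\|h_{0x}\|^2+C(M,M')\,t^{1/2}$, so it suffices to take $M'^2>\|h_{0x}\|^2/(\alpha_1B)$.

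The genuine gap is in Step 3. The equation for $h^{n+1}$ has its diffusion term in $h^{n+1}$ but its source $-(\alpha_2\sigma_i^n+\alpha_3)(|h^n_x|+B)$ in $h^n$. Compactness lets you extract $h^{n_k}\to h$ and, after a further extraction, $h^{n_k+1}\to g$. Passing to the limit and using uniqueness for the frozen problem (from monotonicity of $\Phi(y):=|y|y+2By$) yields only $g={\cal T}h$. Nothing forces $g=h$, so no solution of \eqref{1.7} has been produced. The paper's proof passes over the same point, but it is a real missing step.

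The most direct repair is a contraction estimate. For $w={\cal T}\hat h^1-{\cal T}\hat h^2$ and $\hat w=\hat h^1-\hat h^2$, test the difference equation with $w$. Then use three facts: $(\Phi(a)-\Phi(b))(a-b)\ge 2B(a-b)^2$; $\|\hat\sigma_i^1-\hat\sigma_i^2\|\le C\|\hat w_x\|$; and $\|f\|_{L^4(\Omega)}\le C\|f\|^{3/4}\|f\|_{H^1(\Omega)}^{1/4}$ for $f=\hat h^j_x$ and $f=w$. H\"older and Young in time then give $\|w\|_Y\le C(M,M')\,t_0^{1/4}\|\hat w\|_Y$, where $\|w\|_Y:=\sup_{t\le t_0}\|w(t)\|+\|w_x\|_{Q_{t_0}}$. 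For small $t_0$ the whole sequence $h^n$ therefore converges in $Y$ to a fixed point of ${\cal T}$, and your limit arguments apply to it.

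Two alternatives also work. You can apply Schauder's theorem to ${\cal T}$ on the closed convex set $W_M\subset L^2(0,t_0;H^1_{\rm per}(\Omega))$, with compactness from your Aubin--Lions argument and continuity from uniqueness for the frozen problem. Or you can drop ${\cal T}$ altogether and run your Fourier--Galerkin scheme directly on the full nonlinear equation, where the same estimates close on a short interval.
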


Assume that $\|\hat h\|_{H^1(\Omega)}^2\le M^2 := 2\|h_0\|^2_{H^1(\Omega)} $. Since the {\it a priori} estimates
for local solutions are easier to establish than those for global solutions, we state them
briefly.

\vskip0.2cm
\noindent{\it Step 1.} Multiplying \eq{2.1} by $h$ and integrating the resulting equation with respect to
$x$ over $\Omega$, then using integration by parts, we obtain
\begin{eqnarray}
  \frac12\frac{d}{dt} \|h \|^2 +  \alpha_{1}  \int_\Omega \left( \frac12 | h_{x}|^3 +  B  | h_{x}|^2\right)dx
 & = & \int_\Omega \hat {\cal F}hdx,\non\\
 & \le & C\int_\Omega (|\hat\sigma_i |+1))(|\hat h_x|+1)|h|dx \non\\
 & \le & C(\|\hat h_x\|_{L^\infty(\Omega)}+1)(\|\hat\sigma_i \|+1)) \|h\| \non\\
 & \le & C(\|\hat h_x\|_{H^1(\Omega)}+1)(\|\hat h_x\|+1)) \|h\| .
 \label{2.5}
\end{eqnarray}
Here the H\"older inequality and Sobolev's embedding theorem were applied, and we used
the estimate on singular integrals, e.g., Stein~\cite{Stein}  to deal with the
 non-local term $\hat\sigma_i$.
Hence
\begin{eqnarray}
   \frac{d}{dt} \|h \|  \le  C(\|\hat h_x\|_{H^1(\Omega)}+1)(\|\hat h_x\|+1))  ,
 \label{2.6}
\end{eqnarray}
integrating with respect to $t$ we arrive at
\begin{eqnarray}
 \|h (t)\|  & \le &  C\int_0^t(\|\hat h_x\|_{H^1(\Omega)}+1)(\|\hat h_x\|+1))d\tau + \|h (0)\|\non\\
 & \le & C (\sup_{0\le \tau\le t}\|\hat h_x(\tau)\|+1)
 \left(\int_0^t(\|\hat h_x\|^2_{H^1(\Omega)}+1)d\tau \right)^\frac12  + \|h (0)\|  \non\\
 & \le & C\varepsilon(\sup_{0\le \tau\le t}\|\hat h_x(\tau)\|+1)   + \|h (0)\| ,
 \label{2.7}
\end{eqnarray}
where the absolute continuity of a Lebsgue integral  was used.

\vskip0.2cm
\noindent{\it  Step 2.} Multiplying \eq{2.1} by $-h_{xx}$ and integrating the resulting equation with respect to
$x$ over $\Omega$,   we arrive at
\begin{eqnarray}
 \frac12\frac{d}{dt} \|h_x \|^2 +  \alpha_{1}  \int_\Omega \left(  | h_{x}|  +  B\right)  | h_{xx}|^2dx
 & = & - \int_\Omega \hat{\cal F}h_{xx}dx \non\\
 & \le & C(\|\hat h_x\|_{L^\infty(\Omega)}+1)(\|\hat\sigma_i \|+1)) \|h_{xx}\| \non\\
 & \le & C(\|\hat h_x\|_{H^1(\Omega)}+1)(\|\hat h_x\|+1)) \|h_{xx}\| \non\\
 & \le & C_\varepsilon(\|\hat h_x\|_{H^1(\Omega)}+1)^2(\|\hat h_x\|+1))^2 + \varepsilon \|h_{xx}\| ^2, \non\\
 \label{2.8}
\end{eqnarray}
from which, by choosing $\varepsilon=\frac{\alpha_{1}B}2$, it follows that
\begin{eqnarray}
 &&\frac12 \|h_x \|^2 +  \alpha_{1}  \int_\Omega \left(  | h_{x}|  +  \frac{B}2\right)  | h_{xx}|^2dx   \non\\
 & \le & C\sup_{0\le\tau\le t}(\|\hat h_x(\tau)\|+1))^2\int_0^t(\|\hat h_x\|_{H^1(\Omega)}+1)^2d\tau
 + \frac12 \|h_{0x} \|^2 \non\\
 & \le & C\eta(\sup_{0\le\tau\le t}(\|\hat h_x(\tau)\|+1))^2
 + \frac12 \|h_{0x} \|^2.
 \label{2.9}
\end{eqnarray}
Further applying the interpolation technique, from the equation we infer that
\begin{eqnarray}
  \|h_t \|^\frac32_{L^\frac32(Q_t)}
  \le   C .
 \label{2.10}
\end{eqnarray}

Collecting the above estimates we then have
\begin{Lemma}[A priori estimates] \label{lm2.1}
Suppose that the initial data $h_{0}\in H_{per}^{1}( \Omega )$.
Then there exists a solution $h$ over $Q_{t_0}$ to problem \eq{2.1} --   \eqref{2.3},
 such that for any $t\in [0,t_0]$,
\begin{eqnarray}
 \|h\|^2_{ L^\infty(0,t;H^1(\Omega))} + \|h_t\|^\frac32_{L^\frac32(Q_t)}
  + \int_0^t \left(\|h(\tau)\|^2_{H^2(\Omega)}
 + \|h(\tau)\|^3_{L^3(\Omega)}\right)d\tau \le C_{t_0}.
 \label{2.11}
\end{eqnarray}

\end{Lemma}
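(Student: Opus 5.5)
We combine the three energy-type bounds from Steps 1 and 2 with a smallness-in-time argument, applied to the linearized problem \eq{2.1} -- \eqref{2.3} with $\hat h\in W_M$ given. For this problem the coefficient $\alpha_1(|h_x|+B)\ge\alpha_1 B>0$ is bounded below, so we can work with a Galerkin (Fourier) approximation in the periodic basis. Such approximations are smooth in $x$, and the identities \eq{2.5} and \eq{2.8} hold for them rigorously. The estimate for $\hat\sigma_i$ rests on two facts. First, the Hilbert transform is bounded on $L^2$ (Stein), so $\|\hat\sigma_i\|\le C\|\hat h_x\|$ after periodic extension of $\hat h_x$. Second, the one-dimensional embedding gives $\|\hat h_x\|_{L^\infty}\le C\|\hat h_x\|_{H^1}$.

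The key step is to fix $t_0$ so that $\int_0^{t_0}(\|\hat h_x\|^2_{H^1}+1)d\tau$ is small. This quantity appears in \eq{2.7} and \eq{2.9} and cannot be made small uniformly over all of $W_M$. I would therefore enlarge the ball to
$$
W_{M,K}=\Big\{\hat h:\ \sup_t\|\hat h\|_{H^1}\le M,\ \int_0^{t_0}\|\hat h_x\|_{H^1}^2\le K\Big\},
$$
and choose $K$ from \eq{2.9}: the dissipation term there gives $\alpha_1\frac B2\int_0^t\|h_{xx}\|^2\le \frac12\|h_{0x}\|^2+CM^2\int_0^t(\|\hat h_x\|_{H^1}+1)^2$. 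Then, by Cauchy--Schwarz, $\int_0^t(\|\hat h_x\|_{H^1}+1)d\tau\le \sqrt{t}\,(K+t)^{1/2}$, and this replaces the appeal to absolute continuity in \eq{2.7}. Taking $t_0$ small relative to $M$ and $K$, \eq{2.7} and \eq{2.9} yield $\|h(t)\|^2_{H^1}\le \|h_0\|^2_{H^1}+Ct_0^{1/2}(\dots)\le M^2=2\|h_0\|^2_{H^1}$, together with $\int_0^{t_0}\|h_{xx}\|^2\le K$. This gives a uniform bound on $\int_0^{t_0}\|h\|^2_{H^2}$, and $\|h_x\|_{L^3(Q_{t_0})}$ then follows from $L^\infty H^1$ and $L^2H^2$ by interpolation (or directly from the $|h_x|^3$ term in \eq{2.5}).

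For $h_t$ we write $h_t=\alpha_1(|h_x|+B)h_{xx}+\hat{\cal F}$. By H\"older, $\|\,|h_x|h_{xx}\|_{L^{3/2}(Q_t)}\le \|h_x\|_{L^6(Q_t)}\|h_{xx}\|_{L^2(Q_t)}$. The factor $\|h_x\|_{L^6(Q_t)}$ is controlled by Gagliardo--Nirenberg: $\|h_x\|_{L^6}\le C\|h_x\|^{2/3}\|h_x\|_{H^1}^{1/3}$, which lies in $L^6_t$ because $\|h_x\|_{H^1}^2\in L^1_t$. The term $\hat{\cal F}$ lies in $L^2(0,t;L^2)$, since $\|\hat h_x\|_{L^\infty}\|\hat\sigma_i\|\le C\|\hat h_x\|_{H^1}M$. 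These together give \eq{2.10}.

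All bounds are uniform in the Galerkin dimension, so weak and weak-$*$ compactness produce the limit. Aubin--Lions, using $h$ bounded in $L^2H^2$ and $h_t$ bounded in $L^{3/2}$, gives strong convergence of $h_x$ in $L^2(Q_{t_0})$, which is what is needed to pass to the limit in the nonlinearity $|h_x|h_{xx}=\frac12(|h_x|h_x)_x$. Lower semicontinuity then yields \eq{2.11}, with $C_{t_0}$ depending only on $\|h_0\|_{H^1}$, $B$ and the parameters.

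The main obstacle is the smallness of the time integral. The Step 1 and Step 2 bounds only close if one tracks the $L^2H^2$ norm of $\hat h$ in the invariant set rather than relying on absolute continuity. This is also what makes ${\cal T}$ map $W_{M,K}$ into itself, a property the subsequent fixed-point or iteration argument will need.
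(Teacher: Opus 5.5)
Your diagnosis is right: the appeal to absolute continuity in \eqref{2.7} and \eqref{2.9} is not uniform over $W_M$. However, your repair via $W_{M,K}$ only works for \eqref{2.7}. Cauchy--Schwarz turns the \emph{linear} integral $\int_0^t(\|\hat h_x\|_{H^1}+1)\,d\tau$ into $\sqrt t\,(K+t)^{1/2}$. The right-hand side of \eqref{2.9} is instead $C(M+1)^2\int_0^t(\|\hat h_x\|_{H^1}+1)^2d\tau$, which is quadratic. On $W_{M,K}$ it is only bounded by $C(M+1)^2(K+t)$, and it does not become small as $t_0\to 0$ uniformly, since an element of $W_{M,K}$ may concentrate all of its $L^2(H^2)$ mass in $[0,\epsilon]$. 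What you actually obtain is
\[
\tfrac12\|h_x(t)\|^2+\tfrac{\alpha_1B}{2}\int_0^t\|h_{xx}\|^2d\tau\le\tfrac12\|h_{0x}\|^2+C(M+1)^2(K+t_0),
\]
with no factor $t_0^{1/2}$. Hence neither $\|h(t)\|^2_{H^1}\le M^2$ nor $\int_0^{t_0}\|h_{xx}\|^2\le K$ follows. The latter would need the coefficient of $K$ on the right, of order $(M+1)^2/(\alpha_1B)$, to be smaller than $\alpha_1B/2$. That is a smallness condition on $\|h_0\|_{H^1}$ relative to $\alpha_1B$, which is not assumed. So the invariance of $W_{M,K}$ under ${\cal T}$, which you present as the crux, is not established as written.

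The culprit is the crude embedding $\|\hat h_x\|_{L^\infty}\le C\|\hat h_x\|_{H^1}$, which makes $\|\hat{\cal F}\|^2$ quadratic in $\|\hat h_x\|_{H^1}$. Since $\hat h_x$ is periodic with zero mean, you can use instead
\[
\|\hat h_x\|_{L^\infty}\le\sqrt2\,\|\hat h_x\|^{1/2}\|\hat h_{xx}\|^{1/2}\le\sqrt2\,M^{1/2}\|\hat h_{xx}\|^{1/2}.
\]
Together with $\|\hat\sigma_i\|\le C\|\hat h_x\|\le CM$, this gives $\|\hat{\cal F}\|^2\le C(M)(1+\|\hat h_{xx}\|)$, which is linear, so $\int_0^t\|\hat{\cal F}\|^2d\tau\le C(M)(\sqrt{tK}+t)$. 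Inserting this into the Step 2 energy identity yields
\[
\tfrac12\|h_x(t)\|^2+\tfrac{\alpha_1B}{2}\int_0^t\|h_{xx}\|^2d\tau\le\tfrac12\|h_{0x}\|^2+C(M)(\sqrt{t_0K}+t_0).
\]
Now choose $K:=\|h_{0x}\|^2/(\alpha_1B)+1$ first and then $t_0$ small; this closes both conditions. Take $M^2$ strictly larger than $\|h_0\|^2_{H^1}$, e.g.\ $M^2=2\|h_0\|^2_{H^1}+1$, so that $h_0=0$ is also covered.

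With that change the rest of your plan goes through: Fourier--Galerkin, the $L^6$--$L^2$ H\"older bound for $h_t$, Aubin--Lions, and lower semicontinuity. Note also that for a single fixed $\hat h\in W$ no time-smallness is needed at all. There $\hat{\cal F}\in L^2(Q_{T_e})$, and Step 2 gives \eqref{2.11} directly with a constant depending on $\hat h$. The smallness, which the paper extracts non-uniformly from absolute continuity, only matters for uniformity along the iteration, and that is precisely what your $W_{M,K}$ is meant to supply.
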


\noindent{\bf Proof} of Theorem~\ref{thm1.1A1}. we make use of the following lemmas.
\begin{Lemma}[Aubin-Lions] \label{lm2.3}
Let $B_{0}$ and $B_{2}$ be reflexive Banach spaces and let $B_{1}$
be a Banach space such that $B_{0}$ is compactly embedded in $B_{1}$
and that $B_{1}$ is embedded in $B_{2}$. For $1\le  p_{0}, p_{1}\le +\infty$,
define
$$
 W=\left\{f\mid f\in L^{p_{0}}(0,T;B_{0}),\  \frac{df}{dt}\in L^{p_{1}}(0,T;B_{2})\right\}.
$$
$($\textit{i}$)$ if $p_{0}<+\infty$, then the embedding of $W$ into
$L^{p_{0}}(0,T;B_{1})$ is compact.\\
$($\textit{ii}$)$  if $p_{0}=+\infty$ and $p_{1}>1$, then
the embedding of $W$ into $C([0,T];B_{1})$ is compact.

\end{Lemma}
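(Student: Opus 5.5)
The statement to prove is the Aubin--Lions compactness lemma (Lemma~\ref{lm2.3}). I would use the classical route: an Ehrling-type interpolation inequality combined with equicontinuity in time in the weak space $B_2$.

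\emph{Step 1: Ehrling inequality.} Since $B_0\hookrightarrow B_1$ compactly and $B_1\hookrightarrow B_2$ continuously, for every $\eta>0$ there is $C_\eta$ with
$$
\|f\|_{B_1}\le \eta\|f\|_{B_0}+C_\eta\|f\|_{B_2},\qquad f\in B_0 .
$$
Argue by contradiction. Otherwise there are $f_n$ with $\|f_n\|_{B_0}=1$ and $\|f_n\|_{B_1}>\eta+n\|f_n\|_{B_2}$. Compactness gives a subsequence converging in $B_1$ to some $f$. From $\|f_n\|_{B_2}\le C/n$ and continuity of $B_1\hookrightarrow B_2$ we get $f=0$, which contradicts $\|f_n\|_{B_1}>\eta$.

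\emph{Step 2: case (i), $p_0<\infty$.} Let $\{f_n\}$ be bounded in $W$. The bound on $f_n'$ in $L^{p_1}(0,T;B_2)$ with $p_1\ge1$ gives $\|f_n(t+s)-f_n(t)\|_{B_2}\le \int_t^{t+s}\|f_n'\|_{B_2}$. Hence the time translates satisfy $\|\tau_sf_n-f_n\|_{L^{p_0}(0,T-s;B_2)}\to0$ as $s\to0$, uniformly in $n$.

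Next, the averages $\int_{t}^{t+s}f_n\,d\tau$ are bounded in $B_0$, so they are relatively compact in $B_1$. By Simon's characterization, relative compactness of these averages in $B_1$ plus uniform smallness of translates gives relative compactness in $L^{p_0}(0,T;B_2)$.

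Then apply Ehrling pointwise in $t$ and integrate:
$$
\|f_n-f_m\|_{L^{p_0}(B_1)}\le \eta\cdot 2\sup_k\|f_k\|_{L^{p_0}(B_0)}+C_\eta\|f_n-f_m\|_{L^{p_0}(B_2)} .
$$
So a sequence that is Cauchy in $L^{p_0}(B_2)$ is Cauchy in $L^{p_0}(B_1)$.

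The step I expect to be the main obstacle is the case $p_1=1$. There reflexivity and weak compactness of $f_n'$ are not available, so the translate estimate must come directly from absolute continuity of $f_n$ as a $B_2$-valued function rather than from any uniform integrability of $f_n'$. That is why I go through Simon's criterion instead of the weak-limit argument of Lions.

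\emph{Step 3: case (ii), $p_0=\infty$, $p_1>1$.} By Hölder,
$$
\|f_n(t)-f_n(s)\|_{B_2}\le |t-s|^{1-1/p_1}\|f_n'\|_{L^{p_1}(B_2)},
$$
so the family is equicontinuous in $C([0,T];B_2)$. For each $t$, the set $\{f_n(t)\}$ is bounded in $B_0$ (using continuity of the representative and the $L^\infty(B_0)$ bound), hence relatively compact in $B_1$ and therefore in $B_2$. Arzelà--Ascoli gives a subsequence converging in $C([0,T];B_2)$. Ehrling then upgrades this to convergence in $C([0,T];B_1)$, exactly as in Step 2:
$$
\sup_t\|f_n-f_m\|_{B_1}\le 2\eta M+C_\eta\sup_t\|f_n-f_m\|_{B_2}.
$$
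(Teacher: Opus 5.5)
The paper gives no proof of this lemma; it only refers to Lions and Roub\'{\i}\v{c}ek. Your argument is correct. It is essentially J.~Simon's 1987 argument rather than Lions' classical one: Ehrling's inequality plus Simon's characterization of relatively compact sets in $L^{p}(0,T;B)$ (compact time averages, uniformly small translates) for (i), and Ehrling plus Arzel\`a--Ascoli for (ii).

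Lions' argument instead extracts a weakly convergent subsequence in $W$ and proves strong convergence pointwise in time, which needs reflexivity and $1<p_0,p_1<\infty$. Your route covers the genuine endpoint exponents the statement allows ($p_0=1$ and $p_1=1$), and it uses no reflexivity at all in (i). The price is that the core compactness step is outsourced to Simon's criterion, which is itself a Kolmogorov--Riesz type theorem.

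Two details should be written out.

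In Step 2, for $p_1=1<p_0$, the uniform smallness of translates does not follow from the pointwise bound alone. Use Fubini, $\|\tau_sf_n-f_n\|_{L^1(0,T-s;B_2)}\le s\|f_n'\|_{L^1(0,T;B_2)}$, and interpolate with the uniform bound $\sup_t\|f_n(t+s)-f_n(t)\|_{B_2}\le\|f_n'\|_{L^1(0,T;B_2)}$. This gives $\|\tau_sf_n-f_n\|_{L^{p_0}(0,T-s;B_2)}\le Cs^{1/p_0}$.

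In Step 3, the bound $\|f_n(t)\|_{B_0}\le M$ for every $t$ (not only a.e.) is exactly where reflexivity of $B_0$ enters, and your parenthetical does not say so. Take $t_k\to t$ with $\|f_n(t_k)\|_{B_0}\le M$ and extract a weak limit in $B_0$. Identify it with the strong $B_2$-limit $f_n(t)$ via uniqueness of weak limits in $B_2$, then use weak lower semicontinuity of the norm. Without reflexivity the step still works: $f_n(t)$ lies in the $B_1$-closure of the $B_0$-ball of radius $M$, which is compact in $B_1$, and your Ehrling estimate for differences passes to that closure. The same estimate also shows that each $f\in W$ is continuous into $B_1$, which is needed for the embedding in (ii) to be defined at all.
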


The proof of this Theorem, we refer to, e.g., \cite{Lions,Roubicek}.

\begin{Lemma} [Weak convergence]\label{WeakConvergence}
Let $1<q<\infty$ and let $(0,T_e)\times \Omega$ be an open
domain in $\cR^+\times \cR^n$.
Assume that $g_n, g \in L^q((0,T_e)\times \Omega )$ satisfy
$$
 \|g_n\|_{L^q((0,T_e)\times \Omega )}\le C, \ \ g_n\to g \ a.\
 e. \ in\ (0,T_e)\times \Omega .
$$
Then $g_n$ converges weakly  to $g$ in $L^q((0,T_e)\times \Omega )$.
\end{Lemma}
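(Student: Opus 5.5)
The plan is to argue by contradiction through subsequences, using Egorov's theorem together with the uniform $L^q$ bound. By reflexivity of $L^q(Q_{T_e})$ for $1<q<\infty$, the bounded sequence $\{g_n\}$ is weakly sequentially precompact. It therefore suffices to show that every weakly convergent subsequence has limit $g$. The full sequence then converges weakly to $g$: if it did not, some $\varphi\in L^{q'}$ and $\epsilon>0$ would give a subsequence with $|(g_{n_k}-g,\varphi)|\ge\epsilon$. Extracting a further weakly convergent subsequence would then contradict the identification of its limit.

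So suppose $g_{n_k}\rightharpoonup \tilde g$ weakly in $L^q(Q_{T_e})$. The goal is $\tilde g=g$. Since $Q_{T_e}$ is unbounded in time only up to $T_e$, it has finite measure, and I work on it directly. Fix a measurable $E\subset Q_{T_e}$ and $\epsilon>0$. By Egorov's theorem there is a measurable $F\subset E$ with $|E\setminus F|<\epsilon$ on which $g_{n_k}\to g$ uniformly. The indicator $\chi_F$ lies in $L^{q'}$, so weak convergence gives $\int_F g_{n_k}\to\int_F\tilde g$. Uniform convergence on the finite-measure set $F$ gives $\int_F g_{n_k}\to\int_F g$. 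Hence $\int_F \tilde g=\int_F g$.

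On the remainder, Hölder's inequality gives $\int_{E\setminus F}|\tilde g-g|\le \|\tilde g-g\|_{L^q}\,\epsilon^{1/q'}$. Here $\|\tilde g\|_{L^q}\le\liminf\|g_{n_k}\|\le C$ by weak lower semicontinuity of the norm, and $g\in L^q$ by hypothesis; Fatou's lemma also gives $\|g\|_{L^q}\le C$. Letting $\epsilon\to0$ yields $\int_E\tilde g=\int_E g$ for every measurable $E$ of finite measure. Therefore $\tilde g=g$ a.e.

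If the domain were of infinite measure, I would apply the same argument on an exhaustion by finite-measure sets. Alternatively, I would test only against indicators of bounded sets, whose span is dense in $L^{q'}$; this density, combined with boundedness, upgrades convergence to weak convergence.

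The main obstacle is the step where a.e. convergence is turned into convergence of integrals, since a.e. convergence alone does not control mass concentration. The uniform $L^q$ bound with $q>1$, through the Hölder estimate on the small exceptional set, is exactly what rules out such concentration. This is also why the lemma fails for $q=1$.
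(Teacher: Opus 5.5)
Your proof is correct. The paper gives no proof of this lemma: it is quoted as a standard tool, a classical lemma found for instance in Lions' monograph. The natural comparison is therefore with the standard direct argument, which avoids weak compactness entirely. For $\varphi\in L^{q'}(Q_{T_e})$, take an Egorov set $F$ of nearly full measure on which $g_n\to g$ uniformly, and split
\[
|(g_n-g,\varphi)_{Q_{T_e}}|\le \sup_F|g_n-g|\,\|\varphi\|_{L^1(F)}+2C\,\|\varphi\|_{L^{q'}(Q_{T_e}\setminus F)} .
\]
The last term is small uniformly in $n$ by absolute continuity of $\int|\varphi|^{q'}$, which is where $q'<\infty$ is used. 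Equivalently, functions supported on the ``good'' sets are dense in $L^{q'}$.

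That route is elementary and gives convergence of the whole sequence at once. Yours delegates the compactness to reflexivity and the subsequence principle, after which only a short identification of limits remains.

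One point in your closing remark is misplaced, though it does not affect correctness. In your proof, the H\"older estimate on $E\setminus F$ is applied to the fixed function $\tilde g-g$. Absolute continuity of the integral already suffices there, so your identification step works verbatim for $q=1$. In your argument, $q>1$ enters only through the existence of weakly convergent subsequences. That is exactly what fails for $g_n=n\chi_{(0,1/n)}$ in $L^1(0,1)$. It is in the direct proof above that the uniform-in-$n$ bound $\|g_n-g\|_{L^q}\le 2C$ is the mechanism ruling out concentration.
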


Now we recall the definition of the mapping ${\cal T}$ to construct a sequence of
approximate solutions $ h^n $ ($n=1,2,3,\cdots$) which satisfies
\begin{Lemma}\label{lm2.4} There exists a subsequence of $h_{x}^{n}$
(we still denote it by $h_{x}^{n}$) such that
\begin{eqnarray}
 h_{x}^{n} &\to& h_{x} {\rm \ strongly\ in\ } L^{2}(Q_{T_{e}}),
 \label{2.12}\\
  h_{x}^{n}
 &\to&  h_{x}  {\rm \ a.e.\ in\ }   Q_{T_{e}} ,
 \label{2.13}\\
 ~~|h_{x}^{n}| h_{x}^{n}
 &\to& |h_{x}|h_{x} {\rm \ strongly\ in\ }  L^{1}(Q_{T_{e}})
 \label{2.14}\\
 \sigma_i ^n  | h^n_{x}|
 &\rightharpoonup & \sigma_i |h_{x}|  {\rm \ weakly\ in\ }   L^{1}(Q_{T_{e}})
 \label{2.15}
\end{eqnarray}
as $n\to \infty$. Here $\sigma_i ^n = {\rm P.V.}\int_{-\infty}^{\infty}
 \frac{K\beta   h^n_{x}(x_{1})}{x-x_{1}}dx_{1}$.

\end{Lemma}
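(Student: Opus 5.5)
The plan is to combine the uniform bounds of Lemma~\ref{lm2.1} with the Aubin--Lions lemma (Lemma~\ref{lm2.3}) to get strong compactness of $h_x^n$. Then pass to the nonlinear terms using a.e. convergence, Lemma~\ref{WeakConvergence}, and the $L^2$-boundedness of the Hilbert-type singular integral.

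\textbf{Step 1: compactness, giving \eqref{2.12}.} Lemma~\ref{lm2.1} holds uniformly in $n$, because ${\cal T}$ maps $W_M$ into $W_M$ and the constant $C_{t_0}$ depends only on $M$ and the data. Hence $h_x^n$ is bounded in $L^2(0,t_0;H^1_{\rm per}(\Omega))$. For the time derivative, $h^n_t$ is bounded in $L^{3/2}(Q_{t_0})$, so $(h^n_x)_t=\partial_x h^n_t$ is bounded in $L^{3/2}(0,t_0;W^{-1,3/2}_{\rm per}(\Omega))$. I apply Lemma~\ref{lm2.3}(i) with
\[
B_0=H^1_{\rm per}(\Omega),\quad B_1=L^2(\Omega),\quad B_2=W^{-1,3/2}_{\rm per}(\Omega),\quad p_0=2,\quad p_1=\tfrac32 .
\]
Here $B_0$ embeds compactly into $B_1$ by Rellich, and $B_1$ embeds into $B_2$ by duality, since $W^{1,3}_{\rm per}\subset L^2$ in one dimension. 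This gives a subsequence with $h^n_x\to h_x$ strongly in $L^2(Q_{t_0})$.

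\textbf{Step 2: \eqref{2.13} and \eqref{2.14}.} Passing to a further subsequence yields a.e. convergence, which is \eqref{2.13}. For \eqref{2.14}, I use the pointwise inequality
\[
\bigl||a|a-|b|b\bigr|\le(|a|+|b|)|a-b| .
\]
With Cauchy--Schwarz this gives
\[
\| |h^n_x|h^n_x-|h_x|h_x\|_{L^1}\le \bigl(\|h^n_x\|_{Q}+\|h_x\|_{Q}\bigr)\|h^n_x-h_x\|_{Q}\to0,
\]
where the first factor is bounded by Lemma~\ref{lm2.1}.

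\textbf{Step 3: \eqref{2.15}, the main point.} The singular integral is a constant multiple of the Hilbert transform of $h_x$ (extended periodically, or via its Fourier symbol $-i\,{\rm sgn}\,\xi$). Stein's $L^2$ bound for this operator then gives
\[
\|\sigma^n_i-\sigma_i\|_{L^2(Q_{t_0})}\le C\|h^n_x-h_x\|_{L^2(Q_{t_0})}\to0 .
\]
I then split
\[
\sigma^n_i|h^n_x|-\sigma_i|h_x|=(\sigma_i^n-\sigma_i)|h^n_x|+\sigma_i\bigl(|h^n_x|-|h_x|\bigr).
\]
Each term tends to $0$ in $L^1$ by Cauchy--Schwarz: the first because $\sigma^n_i\to\sigma_i$ in $L^2$ and $|h^n_x|$ is bounded in $L^2$, the second because $|h^n_x|\to|h_x|$ in $L^2$ and $\sigma_i\in L^2$. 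This gives strong, hence weak, $L^1$ convergence.

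\textbf{Main obstacle.} The delicate point is justifying that the principal-value integral over $\cR$ acting on the periodic function $h_x$ is a bounded operator on $L^2(\Omega)$. I would interpret it through the periodic Hilbert transform (cotangent kernel), using the Fourier-transform argument cited from \cite{King09}. As a fallback, Lemma~\ref{WeakConvergence} applies with $q=\frac32$: it needs an a.e.-convergent subsequence of $\sigma_i^n|h^n_x|$ together with an $L^{3/2}$ bound, and the bound follows from the $L^3$ bound on $h_x^n$ and the $L^3$-boundedness of the Hilbert transform.
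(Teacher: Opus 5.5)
Your proof is correct. Steps~1 and~2 match the paper: you use the same Aubin--Lions triple $H^1(\Omega)\subset\subset L^2(\Omega)\subset W^{-1,\frac32}(\Omega)$ with $p_0=2$, $p_1=\frac32$. You then get a.e.\ convergence by passing to a further subsequence, which the paper leaves implicit. The paper never writes out \eqref{2.14}, and your inequality $\bigl||a|a-|b|b\bigr|\le(|a|+|b|)|a-b|$ fills that gap.

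You genuinely depart from the paper at \eqref{2.15}. The paper handles this term in the proof of Theorem~\ref{thm1.1A1} right after the lemma. It uses only the uniform bound $\|\sigma_i^n\|\le C$, extracts a weak-star limit of $\sigma_i^n$ in $L^\infty(0,t_0;L^2(\Omega))$, and pairs it with strong convergence of $|h_x^n|$, which is a weak-times-strong argument. You instead use the linearity of the singular integral and apply the $L^2$ bound to $h_x^n-h_x$. That gives $\sigma_i^n\to\sigma_i$ strongly in $L^2(Q_{t_0})$, so the product converges strongly, not just weakly, in $L^1(Q_{t_0})$.

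Both routes rest on the same ingredient, the $L^2$-boundedness of the periodic Hilbert transform. Your Fourier-multiplier reading justifies it, since $h_x^n$ has zero mean by periodicity.

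Your route buys three things. It gives a stronger conclusion. It needs no identification of a weak limit. It uses only the $L^2(Q_{t_0})$ convergence that Aubin--Lions with $p_0=2$ actually delivers.

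The paper's write-up, by contrast, has two flaws you avoid:
\begin{itemize}
\item It appeals to strong convergence of $h_x^n$ in $L^\infty(0,t_0;L^2(\Omega))$, which that application of Lemma~\ref{lm2.3} does not provide.
\item It bounds $\|\sigma_i^n|h_x^n|\|$ by $\|\sigma_i^n\|\,\|h_x^n\|$, which Cauchy--Schwarz justifies only for the $L^1$ norm.
\end{itemize}
Rephrased entirely in $L^2(Q_{t_0})$, the paper's weak-times-strong argument does give exactly the weak $L^1$ convergence stated. Your fallback via Lemma~\ref{WeakConvergence} is correct but unnecessary.
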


\noindent\textbf{Proof.}  Let $p_{0}=2, p_{1}=\frac{3}2$ and
$$
 B_{0}=H^{1}(\Omega), \ B_{1}=L^{2}(\Omega), \ B_{2}=W^{-1,\frac{3}2}(\Omega).
$$
These spaces satisfy the assumptions of  Lemma~\ref{lm2.3}. Since
estimate  \eqref{2.11}  implies that
$ h_{xx}^{n}\in L^{2}(0,T_{e};L^{2}(\Omega))$, then
\begin{equation}
 h_{x}^{n}\in L^{2}(0,T_{e};H^{1}(\Omega)),
 \label{2.16}
\end{equation}
and
\begin{equation}
 h_{xt}^{n}\in L^{\frac{3}{2}}(0,T_{e};W^{-1,\frac{3}{2}}).
 \label{2.16a}
\end{equation}
Hence $h_{x}^{n}$ is compact in $L^{2}(Q_{T_{e}}$, and there exists a subsequence of it (we still denote
it by the original notation), such that
$$
 h_{x}^{n}\to h_{x}\ {\rm  strongly\ in}\ L^{2}(Q_{T_{e}}),
$$
 as $n\to 0$. From this it follows that
$$
 h_{x}^{n}\to h_{x}\ {\rm  a.e.\ in}\  Q_{T_{e}} .
$$
 This completes the proof of Lemma~\ref{lm2.4}.

\vskip0.2cm
\noindent{\bf Proof of Theorem~\ref{thm1.1A1}.}
By the results in Lemma~\ref{lm2.4}, we need to study the limit of
 \eq{2.15} to conclude the assertions of Theorem~\ref{thm1.1A1}. First we estimate
\begin{eqnarray}
  \|\sigma_i^n   \|\le C  ,
 \label{2.17}\\
 \|\sigma_i^n   |h_x^n|\| \le \|\sigma_i^n   \|\, \|h_x^n \|\le C.
 \label{2.18}
\end{eqnarray}
Thus there exists a subsequence of $\sigma_i^n$, converges weakly star to $\sigma_i $
in $L^\infty(0,t_0;L^2(\Omega))$. Recalling that $h_x^n\to h_x $ strongly in
$L^\infty(0,t_0;L^2(\Omega))$, one obtains
$
  \sigma_{i }^n |h^n_{x}|
  $  converges weakly-star to $\sigma_i|h _{x}| $
in $L^\infty(0,t_0;L^2(\Omega))$.
  Thus the proof  of Theorem~\ref{thm1.1A1} is complete.

\section{A priori estimates}
 \label{sec3}

In this section we are going to derive {\it a priori} estimates  for solutions to
the  problem \eqref{1.1} -- \eqref{1.3}, which may depend only on $T_e$. For more
convenient, we rewrite the equation in the form:
\begin{eqnarray}
   h_{t} -   \alpha_{1}}  ( |{h_{x}|  +  B)  h_{xx} + (\alpha_{2} \sigma_i  + \alpha_{3} )(|  h_{x}|  + B)
 = 0,\ {\rm a.e.\ in}\ Q_{T_e} \label{3.1}
\end{eqnarray}
with the boundary and initial conditions \eq{2.2} and \eq{2.3}.

\begin{Lemma} \label{lm3.1} For any $t\in[0,T_{e}]$ there hold {\rm (}i{\rm )}
if   $\int_\Omega h(t,x)dx\ge 0$,
\begin{eqnarray}
 \frac{\alpha_1}2\|h_x(t)\|^{2}  + \alpha_3\int_\Omega h(t,x)dx
 + \int_{0}^{t}\int_{\Omega}\frac{|h_t|^2}{|h_{x}|  + B }  dxd\tau
 &\le& C + C \|h_0\|^{2}_{H^1(\Omega)},
 \label{3.2a}
 \end{eqnarray}
 and {\rm (}ii{\rm )} if   $\int_\Omega h(t,x)dx < 0$,
\begin{eqnarray}
  \frac{\alpha_1}2\|h_x(t)\|^{2}
 + \int_{0}^{t}\int_{\Omega}\frac{|h_t|^2}{|h_{x}|  + B }  dxd\tau
  \le  C + C \|h_0\|^{2}_{H^1(\Omega)} + \alpha_3\int_\Omega -h(t,x)dx .
 \label{3.2b}
\end{eqnarray}

\end{Lemma}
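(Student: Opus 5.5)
We test \eqref{3.1} against the weighted quantity $h_t/(|h_x|+B)$, which the abstract calls the ``reciprocal weighted norm for $h_t$''. Dividing \eqref{3.1} by $|h_x|+B>0$ gives, a.e. in $Q_{T_e}$,
$$
 \frac{h_t}{|h_x|+B} - \alpha_1 h_{xx} + \alpha_2\sigma_i + \alpha_3 = 0 .
$$
We multiply this by $h_t$ and integrate over $\Omega$. The time derivative produces the dissipation $\int_\Omega |h_t|^2/(|h_x|+B)\,dx$. Integrating by parts with the periodic conditions \eqref{2.2} gives
$$
 -\alpha_1\int_\Omega h_{xx}h_t\,dx=\frac{\alpha_1}{2}\frac{d}{dt}\|h_x\|^2 .
$$
The constant forcing gives $\alpha_3\frac{d}{dt}\int_\Omega h\,dx$. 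The remaining term is $\alpha_2\int_\Omega\sigma_i h_t\,dx$. After integrating in time we obtain the identity
$$
 \frac{\alpha_1}{2}\|h_x(t)\|^2+\alpha_3\int_\Omega h(t)\,dx+\int_0^t\!\!\int_\Omega\frac{|h_t|^2}{|h_x|+B}
 =\frac{\alpha_1}{2}\|h_{0x}\|^2+\alpha_3\int_\Omega h_0\,dx-\alpha_2\int_0^t\!\!\int_\Omega \sigma_i h_t .
$$
The initial terms are bounded by $C+C\|h_0\|_{H^1}^2$. The computation is justified on the smooth approximate solutions of Section~2 and then passed to the limit.

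The main obstacle is the nonlocal term $\alpha_2\int\sigma_i h_t$. We treat it as a time derivative of a quadratic form. Write $\sigma_i = K\beta\pi\,\mathcal H(h_x)$, where $\mathcal H$ is the Hilbert transform. Since $\mathcal H$ commutes with $\partial_x$ and is skew-adjoint, the Fourier symbol of $\mathcal H\partial_x$ is $|\xi|$. Hence
$$
 \int\sigma_i h_t\,dx=\frac{K\beta\pi}{2}\frac{d}{dt}\int |\xi|\,|\hat h(\xi)|^2\,d\xi ,
$$
which is the derivative of a nonnegative $\dot H^{1/2}$ energy $E_{1/2}(h)$. Moving this term to the left side only adds the nonnegative quantity $\alpha_2 E_{1/2}(h(t))$, which we may drop. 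At time $0$ it contributes $\alpha_2 E_{1/2}(h_0)\le C\|h_0\|_{H^1}^2$.

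The sign is where care is needed. If, because of the extension across $\mathbb R$ in \eqref{1.4}, the sign or extension convention makes the form indefinite, we fall back on a different bound. We would use $|\int\sigma_i h_t|\le \|\sigma_i\sqrt{|h_x|+B}\|\,\|h_t/\sqrt{|h_x|+B}\|$, apply Young's inequality, and absorb half of the dissipation. That route needs control of $\int\sigma_i^2(|h_x|+B)$ through Stein's $L^p$ bound for singular integrals and Gagliardo--Nirenberg, and it becomes the hard part. We expect the Fourier route to work.

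Finally we split by the sign of $\int_\Omega h(t)\,dx$. If $\int_\Omega h(t)\,dx\ge0$ and $\alpha_3\ge0$, the identity gives \eqref{3.2a} directly, with all left-hand terms nonnegative. If $\int_\Omega h(t)\,dx<0$, we move $\alpha_3\int h$ to the right as $\alpha_3\int(-h)$, which gives \eqref{3.2b}. The term $\alpha_3\int_\Omega h_0\,dx$ is bounded by $C\|h_0\|_{H^1}$, and in turn by $C+C\|h_0\|_{H^1}^2$. If $\alpha_3<0$, the roles of the two cases swap, and we handle this by the same rearrangement with $|\alpha_3|$.
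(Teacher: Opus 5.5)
Your proposal is correct and is essentially the paper's proof: test \eqref{3.1} with $h_t/(|h_x|+B)$, use the Fourier symbol of the Hilbert transform to write $\int_\Omega\sigma_i h_t\,dx$ as the time derivative of a nonnegative $\dot H^{1/2}$-type quantity (the paper's $\frac{\pi}{L}\sum_{k\ge1}k(|a_{-k}|^2+|a_k|^2)$), drop that quantity, and split according to the sign of $\int_\Omega h\,dx$. Two small corrections: on the periodic domain the quantity is the Fourier-series sum $\sum_k|k|\,|a_k|^2$, not $\int|\xi|\,|\hat h(\xi)|^2d\xi$, and the principal value over $\mathbb R$ of the periodic, mean-zero $h_x$ is just the periodic Hilbert transform, so there is no sign problem and your fallback route is unnecessary; also \eqref{3.2a}--\eqref{3.2b} are mere rearrangements of the same energy inequality, valid whatever the signs of $\alpha_3$ and $\int_\Omega h\,dx$, so $\alpha_3<0$ needs no separate treatment.
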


\noindent{\bf Remark.} The third term in \eq{3.2a}, $ |h_{x}|  + B$ is regarded as
a weight and thus we can that estimate is reciprocally weighted.

\vskip0.2cm
\noindent\textbf{Proof.}  Multiplying Eq. \eqref{3.1} by $h_t/(|h_x|+B) $,
making use of integration by parts, and invoking the
boundary condition \eqref{2.2}, we arrive at
\begin{eqnarray}
  \frac{d}{dt} \left(\frac{\alpha_1}{2}\|h_x(t)\|^{2} + \alpha_{3}\int_{\Omega} h(t,x)dx\right)
  + \alpha_{2}\int_{\Omega} \sigma_i h_t (t,x)dx
 + \int_{\Omega} \frac{|h_t|^2}{ |  h_{x}| + B }dx=0.
 \label{3.3}
\end{eqnarray}
We need to deal with the term of $\int_{\Omega} \sigma_i h_t (t,x)dx$, and write
$$
 h(t,x) = \sum_{k=-\infty}^{\infty} a_k {\rm e}^{i \frac{2\pi}{L}kx},
$$
where $i$ is the imaginary unit, and where $a_k = a_k(t) := \frac1L \int_{a}^{d} h(t,x )
{\rm e}^{i \frac{2\pi}{L}kx} dx$.
There holds
\begin{eqnarray}
 h_x(t,x) &=& \sum_{k=-\infty}^{\infty} b_k {\rm e}^{i \frac{2\pi}{L}kx},\ {\rm where\ }
 b_k=i k \frac{2\pi}{L},\label{3.4a}\\
 h_t(t,x) &=& \sum_{k=-\infty}^{\infty} c_k {\rm e}^{i \frac{2\pi}{L}kx},\ {\rm where\ }
 c_k = (a_k)':=\frac{d}{dt}a_k.
 \label{3.4b}
\end{eqnarray}
Then the Hilbert transform ${\cal H}(h)$ (see e.g. \cite{King09}) of $h(t,x)$ is
\begin{eqnarray}
 {\cal H}(h) &=& i\sum_{k=1}^{\infty}\left( a_{-k} {\rm e}^{ - i \frac{2\pi}{L}kx}
 -a_k {\rm e}^{i \frac{2\pi}{L}kx}\right),
 \label{3.5}
\end{eqnarray}
and  $ {\cal H}(h_x) $ is
\begin{eqnarray}
 {\cal H}(h_x)
 &=&   i\sum_{k=1}^{\infty}\left( b_{-k} {\rm e}^{ - i \frac{2\pi}{L}kx}
   - b_k {\rm e}^{i \frac{2\pi}{L}kx}\right)
   =   i\sum_{k=1}^{\infty}\left(- i k \frac{2\pi}{L}{\rm e}^{ - i \frac{2\pi}{L}kx}
   -i k \frac{2\pi}{L}   {\rm e}^{i \frac{2\pi}{L}kx}\right)\non\\
 &=&   \frac{2\pi}{L} \sum_{k=1}^{\infty}k\left(  a_{-k}  {\rm e}^{ - i \frac{2\pi}{L}kx}
 + a_{k}   {\rm e}^{i \frac{2\pi}{L}kx}\right).
 \label{3.6}
\end{eqnarray}
Therefore applying the properties of the Fourier transform of the Hilbert transformation, we have
\begin{eqnarray}\int_{\Omega} \sigma_i h_t dx
 &=&  \int_a^{a+L}  \frac{2\pi}{L} \sum_{k=1}^{\infty}k\left(  a_{-k}
 {\rm e}^{ - i \frac{2\pi}{L}kx} + a_{k}   {\rm e}^{i \frac{2\pi}{L}kx}\right)
 \sum_{k=-\infty}^{\infty} c_k {\rm e}^{i \frac{2\pi}{L}kx}  dx
 \non\\
  &=&  \frac{2\pi}{L} \sum_{k=1}^{\infty}k\left(   a_{-k} (a_{-k} )'
 +   a_{k} (a_{k} )' \right) \non\\
  &=&  \frac{ \pi}{L} \frac{d}{dt}\sum_{k=1}^{\infty}k\left(   |a_{-k}|^2  +  |a_{k}|^2 \right),
 \label{3.7}
\end{eqnarray}
which, combined with \eq{3.3}, yields
\begin{eqnarray}
  \frac{d}{dt} \left(\frac{\alpha_1}{2}\|h_x \|^{2} +  \frac{ \pi\alpha_{2}}{L}
  \sum_{k=1}^{\infty}k\left(   |a_{-k}|^2  +  |a_{k}|^2 \right)
  + \alpha_{3}\int_{\Omega} h dx\right)
  + \int_{\Omega} \frac{|h_t|^2}{ |  h_{x}|   + B }dx=0.
 \label{3.8}
\end{eqnarray}
Integrating \eq{3.7} with respect to $t$ over $[0,t]$ one obtains
\begin{eqnarray}
  &&  \frac{\alpha_1}{2}\|h_x \|^{2}
  +  \frac{ \pi\alpha_{2}}{L}  \sum_{k=1}^{\infty}k\left(   |a_{-k}|^2
  +  |a_{k}|^2 \right) + \alpha_{3}\int_{\Omega} h dx
 + \int_0^t\int_{\Omega} \frac{|h_t|^2}{ |  h_{x}|   + B }dxd\tau\non\\
 &\le& C+C\|h_0\|^2_{H^1(\Omega)} .
 \label{3.9}
\end{eqnarray}
Noting that $\sum_{k=1}^{\infty}k\left(   |a_{-k}|^2  +  |a_{k}|^2 \right) $ is
 obviously non-negative, we then arrive at \eq{2.1a} in the case that
$\int_{\Omega} h(t,x)dx\ge 0$, otherwise we move this term to the right-hand
side, and obtain  \eq{3.2b}.
Thus the proof of this lemma is complete.

To infer {\it a priori} estimate from \eq{3.2b} we need to employ the
estimate for the term of $h_t$.
\begin{Lemma} \label{lm3.2} For any $t\in[0,T_{e}]$ which satisfies
 $\int_\Omega h(t,x)dx < 0$, there holds that
\begin{eqnarray}
 \int_{\Omega} -h(t,x)dx  \le   \varepsilon \sup\limits_{0\le\tau \le t}\|h_x(\tau)\|^{2}
  + C_\varepsilon.
 \label{3.10}
\end{eqnarray}
Here $\varepsilon$ is a positive constant which can be chosen sufficiently small.

\end{Lemma}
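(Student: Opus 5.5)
We bound the mean $\int_\Omega h(t,x)\,dx$ from below through the time derivative: write
\[
\int_\Omega h(t,x)\,dx=\int_\Omega h_0\,dx+\int_0^t\int_\Omega h_t\,dx\,d\tau ,
\]
and estimate the second term by the reciprocally weighted quantity that appears on the left of \eqref{3.2b}. By Cauchy--Schwarz with the weight $|h_x|+B$,
\[
\Big|\int_0^t\int_\Omega h_t\,dx\,d\tau\Big|\le \Big(\int_0^t\int_\Omega \frac{|h_t|^2}{|h_x|+B}\,dx\,d\tau\Big)^{\frac12}\Big(\int_0^t\int_\Omega (|h_x|+B)\,dx\,d\tau\Big)^{\frac12}.
\]
The second factor is at most $C_{T_e}\big(1+\sup_{0\le\tau\le t}\|h_x(\tau)\|\big)^{1/2}$, by H\"older in $x$ and time integration over $[0,t]\subset[0,T_e]$. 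The constants may depend on $T_e$, in line with the section's aim.

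Next we insert the weighted $h_t$-bound. Set $Y(t):=\sup_{0\le\tau\le t}\int_\Omega (-h(\tau))^+\,dx$-type quantity; concretely, set $m(t):=\max\{0,-\int_\Omega h(t)\,dx\}$ and $S(t):=\sup_{\tau\le t}\|h_x(\tau)\|^2$. Applying \eqref{3.2a} or \eqref{3.2b} at the time in $[0,t]$ realizing the relevant sup, and using monotonicity in $t$ of the dissipation integral, gives
\[
\int_0^t\int_\Omega\frac{|h_t|^2}{|h_x|+B}\le C+C\|h_0\|_{H^1}^2+\alpha_3\sup_{\tau\le t}m(\tau).
\]
Hence, for $t$ with $\int_\Omega h(t)<0$,
\[
m(t)\le C+\big(C+\alpha_3 M(t)\big)^{\frac12}\,C\,(1+S(t))^{\frac14},\qquad M(t):=\sup_{\tau\le t}m(\tau).
\]
The same inequality holds trivially at times where $m=0$, so taking the sup over $[0,t]$ gives
\[
M\le C+C(1+M)^{\frac12}(1+S)^{\frac14}.
\]

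Young's inequality then closes the estimate. First, $C(1+M)^{1/2}(1+S)^{1/4}\le \tfrac12(1+M)+C(1+S)^{1/2}$. Second, $(1+S)^{1/2}\le \varepsilon S+C_\varepsilon$. Absorbing $\tfrac12 M$ into the left side yields $M(t)\le\varepsilon S(t)+C_\varepsilon$, which is \eqref{3.10}. If $\alpha_3\le0$, the sign issue in \eqref{3.2b} disappears, and the argument is the same with one fewer term.

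The main obstacle is the circularity. The weighted $h_t$-integral is itself controlled only up to $\alpha_3\int(-h)$ (Lemma~\ref{lm3.1}(ii)), so the sup-in-time bookkeeping must be done carefully. In particular, one must check that the dissipation bound at time $t$ can be used with $M(t)$ rather than with $m(t)$ alone, which works because the dissipation integral is nondecreasing in $t$. A second delicate point is that the growth exponent of $S$ must be strictly less than $1$; here it is $1/2$ after Young's inequality, which is exactly what allows an arbitrarily small $\varepsilon$ in front of $S$.
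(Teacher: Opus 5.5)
Your proposal is correct and is essentially the paper's proof. You write $\int_\Omega -h(t,x)\,dx$ via $h_0$ and $\int_0^t\int_\Omega h_t$, apply Cauchy--Schwarz with the weight $|h_x|+B$, insert \eqref{3.2b}, and absorb with Young's inequality to reach $\int_\Omega -h(t,x)\,dx\le C\sup_{\tau\le t}\|h_x(\tau)\|+C\le\varepsilon\sup_{\tau\le t}\|h_x(\tau)\|^2+C_\varepsilon$.

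The only difference is your sup-in-time bookkeeping through $M(t)$, which is unnecessary. Your stated justification for it also runs the wrong way: a bound on the dissipation at a sup-realizing time $\tau^*\le t$ does not bound the dissipation at $t$, because monotonicity goes the other direction. Instead, Lemma~\ref{lm3.1}(ii) applied at the same time $t$ bounds $\int_0^t\int_\Omega |h_t|^2/(|h_x|+B)\,dx\,d\tau$ by $C+\alpha_3\int_\Omega -h(t,x)\,dx$. So the circularity closes pointwise in $t$, exactly as in the paper.
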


%
\noindent\textbf{Proof.} We write
\begin{eqnarray}
 0 < \int_{\Omega} -h(t,x)dx  &=& \int_{\Omega} \left(\int_{0}^{t}-h_t(\tau,x)d\tau
 + h_0(x)\right)dx \non\\
  &=& \int_{0}^{t} \int_{\Omega} \frac{-h_t(\tau,x)}{(|h_{x}|  + B)^\frac12}
  (|h_{x}|  + B)^\frac12dx d\tau + \int_{\Omega} h_0 dx \non\\
    &\le & \left(\int_{0}^{t} \int_{\Omega} \frac{|h_t|^2)}{ |h_{x}|  + B }dx d\tau\right)^\frac12
   \left(\int_{0}^{t} \int_{\Omega} ( |h_{x}|  + B) dx d\tau \right)^\frac12
   + \int_{\Omega} h_0 dx .\non\\
    &  &
  \label{3.11}
\end{eqnarray}
Thus invoking \eq{3.2b} we get
\begin{eqnarray}
    \int_{\Omega} -h(t,x)dx
  &\le & C\left( \int_{\Omega} -h(t,x)dx +1 \right)^\frac12
  \left(T_e\sup\limits_{0\le\tau \le t} \|h_{x}(\tau)\|  + 1\right) ^\frac12 + C ,
  \label{3.12}
\end{eqnarray}
from which it follows that
\begin{eqnarray}
 \left(\int_{\Omega} -h(t,x)dx  \right)^\frac12
  &\le &   C \left(T_e \sup\limits_{0\le\tau \le t}\|h_x(\tau)\| + 1\right)   ^\frac12 + C\non\\
    &\le &  \left(\varepsilon  \sup\limits_{0\le\tau \le t}\|h_x(\tau)\|
    + C_\varepsilon\right)  ^\frac12 + C.
  \label{3.13}
\end{eqnarray}
The proof of this lemma is thus complete.

\begin{Lemma} \label{lm3.3} For any $t\in[0,T_{e}]$  there holds that
\begin{eqnarray}
  \|h_x(t)\|^{2} + \int_{\Omega} |h | dx + \int_{0}^{t}\int_{\Omega}
  \frac{|h_t|^2}{|h_{x}|  + B }  dxd\tau
  &\le&  C, \label{3.14a} \\
  \int_{0}^{t}\int_{\Omega} \left( \alpha_1 h_{xx}-(\alpha_2 \sigma_i + \alpha_3)\right)^2(|h_{x}|  + B ) dxd\tau
  &\le&  C, \label{3.14b} \\
  \int_{0}^{t}\int_{\Omega} |  h_{xx}|^2(|h_{x}|  + B ) dxd\tau
  &\le&  C, \label{3.14c}\\
  \int_{0}^{t}\left(\| h_{x}\|^3_{L^\infty(\Omega)} + \| h_{xx}\|^2  \right) d\tau
  &\le&  C,\label{3.14d}\\
  \int_{0}^{t}\left\| | h_{x} |^\frac12  h_{xx} \right\|_{L^\frac32(\Omega)}^\frac32   d\tau
  &\le&  C.\label{3.14e}
\end{eqnarray}

\end{Lemma}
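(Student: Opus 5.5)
The plan is to derive the five estimates in order, each from the previous ones together with Lemmas~\ref{lm3.1}--\ref{lm3.2} and the equation \eqref{3.1}.

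\emph{Estimate \eqref{3.14a}.} Split into the two cases of Lemma~\ref{lm3.1}. If $\int_\Omega h(t,x)dx<0$, insert \eqref{3.10} into the right-hand side of \eqref{3.2b}. This gives
$\frac{\alpha_1}2\|h_x(t)\|^2+\int_0^t\!\int_\Omega \frac{|h_t|^2}{|h_x|+B}\le C+\alpha_3\varepsilon\sup_{\tau\le t}\|h_x(\tau)\|^2+C_\varepsilon$.
Taking the supremum over $t$ and choosing $\varepsilon$ small absorbs the $\sup$ term. If $\int_\Omega h\ge 0$, then \eqref{3.2a} gives the bound directly.

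In either case $|\int_\Omega h\,dx|\le C$. Then $\int_\Omega|h|\,dx\le |\Omega|\,|\bar h|+|\Omega|^{3/2}\|h_x\|\le C$ by Poincar\'e--Wirtinger for periodic functions. This settles \eqref{3.14a}.

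\emph{Estimate \eqref{3.14b}.} By \eqref{3.1}, a.e.
$$h_t=(|h_x|+B)\big(\alpha_1 h_{xx}-(\alpha_2\sigma_i+\alpha_3)\big).$$
Hence
$$\frac{|h_t|^2}{|h_x|+B}=\big(\alpha_1 h_{xx}-(\alpha_2\sigma_i+\alpha_3)\big)^2(|h_x|+B),$$
and \eqref{3.14b} is just the weighted bound of \eqref{3.14a}.

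\emph{Estimate \eqref{3.14c}.} Expand the square in \eqref{3.14b} and keep $\alpha_1^2h_{xx}^2(|h_x|+B)$. The cross and lower terms are controlled by Young's inequality, so it suffices to bound $\int_0^t\!\int_\Omega(\sigma_i^2+1)(|h_x|+B)$. This is the main obstacle, since $\sigma_i$ is only as regular as $h_x$.

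Stein's $L^p$ boundedness of the Hilbert transform gives $\|\sigma_i\|_{L^p}\le C\|h_x\|_{L^p}$ for $1<p<\infty$. Therefore
$$\int_\Omega\sigma_i^2|h_x|\le \|\sigma_i\|_{L^3}^2\|h_x\|_{L^3}\le C\|h_x\|_{L^3}^3\le C\|h_x\|_{L^\infty}\|h_x\|^2\le C\|h_x\|_{L^\infty},$$
using \eqref{3.14a}.

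To handle $\|h_x\|_{L^\infty}$, use $\int_\Omega h_x=0$ (periodicity). Then
$$\|h_x\|_{L^\infty}^{3/2}\le C\int_\Omega |h_x|^{1/2}|h_{xx}|\le C\Big(\int_\Omega|h_x||h_{xx}|^2\Big)^{1/2},$$
because $\frac{d}{dx}(|h_x|^{1/2}h_x)=\frac32|h_x|^{1/2}h_{xx}$. Hence $\|h_x\|_{L^\infty}\le C(\int_\Omega |h_x|h_{xx}^2)^{1/3}$, which Young absorbs into $\varepsilon\int_\Omega|h_x|h_{xx}^2+C_\varepsilon$. This yields \eqref{3.14c}, with constants depending on $T_e$.

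\emph{Estimates \eqref{3.14d} and \eqref{3.14e}.} The $B$-part of \eqref{3.14c} gives $\int_0^t\|h_{xx}\|^2\le C/B$. The same mean-zero argument gives $\|h_x\|_{L^\infty}^3\le C\int_\Omega|h_x|h_{xx}^2$, so $\int_0^t\|h_x\|_{L^\infty}^3\le C$. This is \eqref{3.14d}.

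Finally, by H\"older,
$$\big\||h_x|^{1/2}h_{xx}\big\|_{L^{3/2}}^{3/2}\le \Big(\int_\Omega|h_x|h_{xx}^2\Big)^{3/4}|\Omega|^{1/4}.$$
Integrating in time and using H\"older in $t$ with \eqref{3.14c} gives \eqref{3.14e}.
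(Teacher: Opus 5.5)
Your proof is correct. For \eqref{3.14a}--\eqref{3.14b} it follows the paper's argument, and you also supply the Poincar\'e--Wirtinger step giving $\int_\Omega|h|\,dx\le C$, which the paper omits. From \eqref{3.14c} on, your route differs.

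In \eqref{3.16} the paper bounds the nonlocal term by $C(\|h_x\|_{L^4(\Omega)}^2+1)\|h_x\|\le C(\|h_x\|_{H^1(\Omega)}^2+1)$. It then asserts $\int_0^t\|h_x\|_{H^1(\Omega)}^2d\tau\le C$, which as written presupposes the $\|h_{xx}\|^2$ bound being proved. This can be repaired by Gagliardo--Nirenberg and absorption into $B\int_0^t\|h_{xx}\|^2d\tau$.

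You instead use the mean-zero inequality $\|h_x\|_{L^\infty(\Omega)}^3\le C\int_\Omega|h_x|h_{xx}^2dx$ together with the $L^3$ bound for the Hilbert transform, and absorb into the degenerate part of the left-hand side. This has two advantages:
\begin{itemize}
\item It keeps the $|h_x|h_{xx}^2$ part of \eqref{3.14c} and the $L^3_tL^\infty_x$ part of \eqref{3.14d} uniform in $B\in(0,1]$, which is relevant for Theorem~\ref{thm1.1B}.
\item It supplies $\int_0^t\|h_x\|^3_{L^\infty(\Omega)}d\tau\le C$, which the paper does not derive explicitly but uses in \eqref{3.17}.
\end{itemize}

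Finally, your H\"older argument proves \eqref{3.14e} as displayed, which is in fact immediate from \eqref{3.14c}. The paper's \eqref{3.17} estimates a different quantity, $\int_0^t\int_\Omega(|h_x||h_{xx}|)^{3/2}dxd\tau$, using H\"older in time with exponents $4$ and $4/3$. That is the quantity needed for $|h_x|h_x\in L^{3/2}(0,T_e;W^{1,3/2}_{\rm per}(\Omega))$ in \eqref{1.9}. Your inequality gives this bound at once as well, since $\int_\Omega(|h_x||h_{xx}|)^{3/2}dx\le\|h_x\|_{L^\infty(\Omega)}^{3/4}|\Omega|^{1/4}\big(\int_\Omega|h_x|h_{xx}^2dx\big)^{3/4}\le C\int_\Omega|h_x|h_{xx}^2dx$.
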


\noindent\textbf{Proof.} If Case (i) in Lemma~\ref{lm3.1} happens, then \eq{3.14a} is done.
Otherwise we make use of
\eq{3.2b} and \eq{3.10} to obtain
\begin{eqnarray}
   \|h_x(t)\|^{2}
 + \int_{0}^{t}\int_{\Omega}\frac{|h_t|^2}{|h_{x}|  + B }  dxd\tau
  \le  C + C \|h_0\|^{2}_{H^1(\Omega)} + \varepsilon \sup\limits_{0\le\tau \le t}
  \|h_x(\tau)\|^{2}. \label{3.15}
\end{eqnarray}
Choosing $\varepsilon$ sufficiently small, we then arrive at \eq{3.14a}.

Estimate \eq{3.14b} is derived from the equation and \eq{3.14a}. With the help of the
simple inequality $(a+b)^2\le 2(a^2+b^2)$,
we write
\begin{eqnarray}
 &&\int_{0}^{t}\int_{\Omega} | \alpha_1 h_{xx}|^2(|h_{x}|  + B ) dxd\tau \non\\
 &\le& 2\int_{0}^{t}\int_{\Omega} \left( \alpha_1 h_{xx}-(\alpha_2 \sigma_i + \alpha_3)\right)^2(|h_{x}|  + B ) dxd\tau
  + 2\int_{0}^{t}\int_{\Omega} \left( \alpha_2 \sigma_i + \alpha_3 \right)^2(|h_{x}|  + B ) dxd\tau\non\\
   &\le&  C + C\int_{0}^{t}\left(\|h_{x}\|_{L^4(\Omega)}^2  + 1 \right)\|h_{x}\|  d\tau\non\\
   &\le&  C + C\int_{0}^{t}\left(\|h_{x}\|^2 _{H^1(\Omega)} + 1 \right)   d\tau
   \le  C.
 \label{3.16}
\end{eqnarray}

Assume that $p$ is a positive number to be determined. We invoke the
H\"older inequality to estimate
\begin{eqnarray}
   \int_{0}^{t} \int_{\Omega}   |\, | h_{x} |     h_{xx}  |^p dx   d\tau
  & = &   \int_{0}^{t}  \| h_{x} \|_{L^\infty(\Omega)}^\frac{p}{2}\int_{\Omega}|  (| h_{x} |^\frac{1}{2}
   |  h_{xx}  |)^pdx  d\tau  \non\\
 & \le &   \left(\int_{0}^{t}  \| h_{x} \|_{L^\infty(\Omega)}^\frac{pq'}{2}d\tau\right)^\frac1{q'}
   \left(\int_{0}^{t}\int_{\Omega}|  (| h_{x} |^\frac{1}{2}   |  h_{xx}  |)^{pq}dx  d\tau
   \right)^\frac1{q} \non\\
 & \le &  C.
 \label{3.17}
\end{eqnarray}
This inequality holds if we assume $p,q\ge 1$ satisfy
\begin{eqnarray}
 pq  &\le& 2,\non\\
 \frac1q + \frac1{q'} &=&  1, \non\\
 \frac{pq'}{2}&\le& 3,\non
\end{eqnarray}
which implies that $p\le \frac32$, and we see that the right-hand side of \eq{3.17} is finite.
Thus the proof of this lemma is complete.

\begin{Lemma} \label{lm3.5} There holds for any $t\in[0,T_{e}]$
\begin{eqnarray}
 \|h_{t}\|_{L^\frac32(Q_{T_e})} \le C.
 \label{3.19}
\end{eqnarray}

\end{Lemma}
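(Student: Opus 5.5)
The bound on $\|h_t\|_{L^{3/2}(Q_{T_e})}$ should follow from equation \eqref{3.1} together with the estimates already collected in Lemma~\ref{lm3.3}. The most direct route uses the reciprocally weighted bound in \eqref{3.14a}. Split
$$
|h_t|^{\frac32} = \left(\frac{|h_t|}{(|h_x|+B)^{\frac12}}\right)^{\frac32}(|h_x|+B)^{\frac34},
$$
and apply H\"older's inequality in $Q_t$ with exponents $\frac43$ and $4$. This gives
$$
\int_0^t\!\!\int_\Omega |h_t|^{\frac32}dxd\tau \le \left(\int_0^t\!\!\int_\Omega \frac{|h_t|^2}{|h_x|+B}dxd\tau\right)^{\frac34}\left(\int_0^t\!\!\int_\Omega (|h_x|+B)^3dxd\tau\right)^{\frac14}.
$$
The first factor is bounded by \eqref{3.14a}. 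For the second factor, use $(|h_x|+B)^3\le C(|h_x|^3+B^3)$ and $\int_\Omega|h_x|^3dx\le |\Omega|\,\|h_x\|^3_{L^\infty(\Omega)}$. Then \eqref{3.14d} bounds $\int_0^t\|h_x\|^3_{L^\infty(\Omega)}d\tau$. Together with the finiteness of $T_e$ and $|\Omega|$, this yields \eqref{3.19}.

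As a cross-check, and to obtain the analogous bound on the equation's individual terms, I would also estimate the right-hand side of \eqref{3.1} directly. The term $\alpha_1(|h_x|+B)h_{xx}$ is bounded in $L^{3/2}(Q_{T_e})$ by \eqref{3.17} with $p=\frac32$ and by $\|h_{xx}\|_{L^2(Q_{T_e})}$ from \eqref{3.14d}. For the lower-order term $(\alpha_2\sigma_i+\alpha_3)(|h_x|+B)$, Stein's $L^2$ bound for the Hilbert-type singular integral gives $\|\sigma_i(\tau)\|\le C\|h_x(\tau)\|\le C$ uniformly in $\tau$ by \eqref{3.14a}. Hence $\|\sigma_i|h_x|\|_{L^{3/2}(\Omega)}\le C\|\sigma_i\|\,\|h_x\|_{L^6(\Omega)}\le C\|h_x\|_{L^\infty(\Omega)}$. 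Its $\frac32$ power is integrable in time by \eqref{3.14d}, since $\frac32\le 3$.

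The only delicate point is making sure that each of these time-integrability exponents fits within the available bounds; I expect that bookkeeping to be the main obstacle. The first route avoids the issue entirely, since it only uses the $L^3(Q_{T_e})$ control of $h_x$, so I would present that one as the proof.
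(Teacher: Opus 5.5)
Your proposal is correct, but your primary route differs from the paper's. The paper's one-line proof (``from Lemma~\ref{lm3.3} and equation \eqref{3.1}'') is essentially your \emph{cross-check}. It reads $h_t$ off \eqref{3.1} and bounds each term in $L^{3/2}(Q_{T_e})$ separately: \eqref{3.17} with $p=\frac32$ (the $p\le\frac32$ bookkeeping there is set up for exactly this) handles $|h_x|h_{xx}$, the $L^2(Q_{T_e})$ bound on $h_{xx}$ handles $Bh_{xx}$, and the singular-integral bound on $\sigma_i$ handles the lower-order terms.

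Your main argument instead applies one H\"older inequality, with exponents $\frac43$ and $4$, to the reciprocally weighted dissipation in \eqref{3.14a}, and otherwise needs only $h_x\in L^3(Q_{T_e})$. The two routes are closely related. By \eqref{3.1},
$$
h_t(|h_x|+B)^{-1/2}=(\alpha_1h_{xx}-\alpha_2\sigma_i-\alpha_3)(|h_x|+B)^{1/2},
$$
so your split $L^2\times L^6\to L^{3/2}$ is the same exponent pattern as \eqref{3.17}. You apply it to the whole right-hand side at once, whereas the paper applies it only to the highest-order term.

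What your route buys is brevity: at this step it needs no separate treatment of $\sigma_i$ or of the second-derivative term. What the paper's term-by-term route buys is extra information, namely $|h_x|h_{xx}=\frac12(|h_x|h_x)_x\in L^{3/2}(Q_{T_e})$. That is the flux regularity asserted in \eqref{1.9}, and your route by itself does not give it.
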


\noindent\textbf{Proof.}    From Lemma~\ref{lm3.3} and   equation \eq{3.1}
one can get \eq{3.19} easily.

\subsection{Existence of solutions to the original IBVP}
 \label{sec3.1}

  In this sub-section we shall employ the method of continuation of local
  solutions to prove Theorem~\ref{thm1.1}. To this end, the existence of
  local solutions has been proved in Section~\ref{sec2}, and   {\it a priori}
  estimates established in this section listed mainly in Lemma~\ref{lm3.3}, we then
  assert from these two to conclude the local solutions can be
  extended step by step to a global one.
%

For the uniqueness, we assume there exist two solutions $h^1,\ h^2$ which satisfy \eq{1.7}, and define $h= h^1- h^2$.  Then
$h$ satisfies
\begin{eqnarray}
  && -(h_{t}, \varphi)_{\Omega}  - \frac{\alpha_{1}}{2} (( |h^1_{x}|h^1_{x} + 2B h^1_{x}-
   ( |h^2_{x}|h^2_{x} + 2B h^2_{x}, \varphi_{x} )_{Q_{T_{e}}}  \non\\
  &=&   ( (\alpha_{2}\sigma^1_i + \alpha_{3} )(|h^1_{x}| + B) -(\alpha_{2}\sigma^2_i + \alpha_{3} )(|h^2_{x}| + B) , \varphi) _{Q_{T_{e}}} \non\\
  &=&  (  \alpha_{2}\sigma _i  (|h^1_{x}| + B) + (\alpha_{2}\sigma^2_i + \alpha_{3} )(|h^1_{x}| - |h^2_{x}| ) ,\varphi) _{Q_{T_{e}}},
 \label{3.1unique}
\end{eqnarray}
Choosing test function $\varphi=h$, and noting the monotonicity of the mapping $y\mapsto y|y|$, making use of the H\"older
inequality, we see that \eq{3.1unique} turns out to be
\begin{eqnarray}
   &&\frac12  \|h(t)\|^2 +    \alpha_{1}  B \int_0^t\|h_{x}\|^2  d\tau\non\\
  &\le& \frac12  \|h_0\|^2 +  C\int_{Q_t}  \left(| \sigma_i  (|  h^1_{x}|  + B)h|
  +  | \sigma_i^2|\,  | h^1_{x} - h^2_{x}|\, |h|\right)dxd\tau \non\\
  &\le&   C\int_0^t \left(  \| \sigma_i \|^2 + (\|  (h^1_{x}, \sigma_i^2)\|^2_{L^\infty(\Omega)} + 1\right)\|h\|^2 d\tau
  +  \varepsilon  \int_0^t\| h_{x}\|^2d\tau,
 \label{3.1unique1}
\end{eqnarray}
Taking $\varepsilon= \frac  {\alpha_{1}  B }2$, we infer from \eq{3.1unique1} that
\begin{eqnarray}
  \frac12  \|h(t)\|^2 + \frac  {\alpha_{1}  B }2\int_0^t\|h_{x}\|^2  d\tau
  \le    C\int_0^t \left(  \| \sigma_i \|^2 +  \|  (h^1_{x}, \sigma_i^2)\|^2_{L^\infty(\Omega)} + 1\right)\|h\|^2    d\tau.
 \label{3.1unique2}
\end{eqnarray}
Applying the Gronwall inequality in the integral form one gets
$$
 \|h(t)\|^2=0,
$$
then $h=0$ a.e. in $Q_t$. Thus the uniqueness of solution is proved.

\subsection{Regularity of solutions}
 \label{sec3.2}

To complete the proof of Theorem~\ref{thm1.1}, we still need to investigate the regularity of solutions.
We now assume that $h_0\in H^2_{per}(\Omega)$. Formally differentiating \eq{3.1} with respect to $t$ we arrive at
\begin{eqnarray}
   h_{tt} -   \alpha_{1} \left(( | h_{x}|  +  B)  h_{xx}\right)_t
   + \left((\alpha_{2} \sigma_i  + \alpha_{3} )(|  h_{x}|  + B)\right)_t
   = 0, \label{3.1regul}
\end{eqnarray}

Multiplying \eq{3.1regul}  by $h_t$ and integrating the resulting equation with respect to $x$, recalling
$$
  \left(( | h_{x}|  +  B)  h_{xx}\right)_t = \left(\left(\int^{h_x} (|y| + B) dy\right)_x\right)_t
  = \left(\left(\int^{h_x} (|y| + B) dy\right)_t\right)_x ,
$$
we obtain
\begin{eqnarray}
   & &\frac12\frac{d}{dt}  \| h_{t} \|^2 +  \alpha_{1}  \int_\Omega( | h_{x}|  +  B)  |h_{xt}|^2 dx\non\\
   &=&  - \int_\Omega \left( \alpha_{2} (\sigma_i   )_t (|  h_{x}|  + B)  +  (\alpha_{2} \sigma_i  + \alpha_{3} )
   (|  h_{x}|  )_t\right) h_t dx\non\\
   &\le& C\left( \| ( \sigma_i   )_t \|(\|  h_{x}\|+  1  ) +
   \|  h_{xt}\| ( \|\sigma_i\| + 1  )  \right)\|h_t\|_{L^\infty(\Omega)}  \non\\
   &\le& C   \|  h_{xt}\| \, \|h_t\|_{L^\infty(\Omega)} .
   \label{3.1regul1}
\end{eqnarray}
Applying the Nirenberg inequality in the form $\|f\|_{L^\infty(\Omega)}\le C\|f_x\|^\frac12\|f\|^\frac12 + C'\|f\|$,
we then deduce from \eq{3.1regul1} that
\begin{eqnarray}
   \frac12\frac{d}{dt}  \| h_{t} \|^2 +  \alpha_{1}  \int_\Omega( | h_{x}|  +  B)  |h_{xt}|^2 dx
   &\le& C   (\|  h_{xt}\| +1)  ( \|h_{xt}\|^\frac12 \|h_{t}\|^\frac12+\|h_{t}\|) \non\\
   &\le& C (\|  h_{xt}\|^\frac32 \|h_{t}\|^\frac12 + \|  h_{xt}\|  \|h_{t}\| + \|h_{t}\| +1 ) \non\\
   &\le& \varepsilon \|  h_{xt}\|^2  + C_\varepsilon (\|h_{t}\|^2 + 1)   .
   \label{3.1regul2}
\end{eqnarray}
Here the Young inequality in the form  $ab\le \varepsilon a^p + C\varepsilon b^q$ with various $p,q$ such that
$\frac1p+\frac1q=1$ ($p,q\ge 1$). It is easy to see from the equation \eq{3.1} that $\|h_t(0,x)\|\le C$.
Making use of the Gronwall inequality to \eq{3.1regul2} for $\| h_{t} \|^2$
we then have
\begin{eqnarray}
    \| h_{t} \|^2 + \int_0^t\int_\Omega( | h_{x}|  +  B)  |h_{xt}|^2 dx d\tau    \le  C   .
   \label{3.1regul3}
\end{eqnarray}
Therefore the proof of Theorem~\ref{thm1.1} is finished.

\section{Large-time behavior of the solutions}
 \label{sec4}

This section is devoted to the study of large-time behavior of the solutions, however we need more
assumptions on some parameters, e.g., $\gamma$, $b$. We use $C$ to denote  constants which are
{\it independent} of $t$. In this section we assume that $B>0$.

\subsection{Stationary problem}

The corresponding stationary solution   satisfies
\begin{eqnarray}
  \frac{\alpha_{1}}{2} ( |\hat h_{x}|\hat h_{x} + 2B \hat h_{x},\varphi_{x} )_{\Omega}
  + ((\alpha_{2}\hat \sigma_i + \alpha_{3} )(|\hat h_{x}| + B), \varphi) _{\Omega}
  &=& 0,\label{4.2} \\
  \int_\Omega\hat h(x)dx &=& \alpha,
  \label{4.2a}
\end{eqnarray}
where $\alpha$ is an arbitrarily given constant
 and $\hat \sigma_i = {\rm P.V.}\int_{-\infty}^{\infty}\frac{K\beta \hat h_{x}(x_{1})}{x-x_{1}}dx_{1}$. Noting that
we add the restriction \eq{4.2a}, this similar to the Neumann problem to guarantee the uniqueness of solution and also
makes the existence of solutions easier.
Another difficulty in the proof of the existence and uniqueness of weak solution to problem  \eq{4.2} -- \eq{4.2a}
is due to that there is a non-local term, i.e. $\hat \sigma_i$ in \eq{4.2}. We construct solutions as follows by
replacing $ \sigma_i$.
\begin{eqnarray}
  \frac{\alpha_{1}}{2} ( |\hat h_{x}|\hat h_{x} + 2B \hat h_{x},\varphi_{x} )_{\Omega}
  + ((\alpha_{2}  \sigma_i + \alpha_{3} )(|  h_{x}| + B), \varphi) _{\Omega}
  &=& 0,\label{4.2b} ,
\end{eqnarray}
then it is a usual quasilinear problem and we can establish {\it a priori} estimates as done in the paragraph about estimates
for the stationary solutions, and here omit the details. We then prove the existence.

For the uniqueness, we follow the standard procedure and assume that there are two solutions, denoted by
$h^1(x)$ and $h^2(x)$. Define $h(x) = h^1(x) - h^2(x)$, then $h(x)$ satisfies
\begin{eqnarray}
  \frac{\alpha_{1}}{2} ( | h^1_{x}|  h^1_{x} + 2B   h^1_{x} - (| h^2_{x}| h^2_{x} + 2B h^2_{x}),\varphi_{x} )_{\Omega} &&\non\\
  + ((\alpha_{2} \sigma_i^1 + \alpha_{3} )(| h^1_{x}| + B) - (\alpha_{2} \sigma_i^2 + \alpha_{3} )(| h^2_{x}| + B), \varphi) _{\Omega}
  & = & 0,\label{4.2unique} \\
  \int_\Omega  h(x)dx &=& 0,
  \label{4.2aunique}
\end{eqnarray}
where $\sigma_i^k={\rm P.V.}\int_{-\infty}^{\infty}\frac{K\beta h^k_{x}(x_{1})}{x-x_{1}}dx_{1}$ with $k=1,2$.
Choosing $\varphi=h$, and recalling the monotonicity of  the operator $y\mapsto |y|y$, we arrive at
\begin{eqnarray}
   \alpha_{1}B \| h_{x}\| ^2 +  ( \alpha_{2} (\sigma_i^1 - \sigma_i^2)  (| h^1_{x}| + B) +
    (\alpha_{2} \sigma_i^2 + \alpha_{3} )(| h^1_{x}| - | h^2_{x}|), h) _{\Omega}
  & = & 0,\label{4.2unique1} \\
  \int_\Omega  h(x)dx &=& 0.
  \label{4.2aunique1}
\end{eqnarray}
We need to estimate the second and third terms in the left-hand side of \eq{4.2unique1} as follows.
\begin{eqnarray}
    |  ( \alpha_{2} (\sigma_i^1 - \sigma_i^2)  (| h^1_{x}| + B)  , h) _{\Omega}  |
    &\le& \alpha_{2}\|  \sigma_i^1 - \sigma_i^2\| \,\| (|h^1_{x}| + B)  h\| \non\\
    &\le & C\alpha_{2} \|  h_x\|\, \| |h^1_{x}| + B\|\,\| h\|_{L^\infty(\Omega)} \non\\
    &\le &  C\alpha_{2} \|  h_x\|^2,\label{4.2unique2}
\end{eqnarray}
and
\begin{eqnarray}
 |(\alpha_{2} \sigma_i^2 + \alpha_{3} )(| h^1_{x}| - | h^2_{x}|), h) _{\Omega}  |
 &\le& \| | h^1_{x}| - | h^2_{x}|\|\, \| \alpha_{2} \sigma_i^2 + \alpha_{3}\|\,\| h\|_{L^\infty(\Omega)} \non\\
  &  \le &  (C\alpha_{2} \|h^2_x\| + \alpha_{3})\|   h _{x} \|^2,
  \label{4.2unique3}
\end{eqnarray}
then   from  \eq{4.2unique2} and \eq{4.2unique3} we find \eq{4.2unique1}  turns out to be
\begin{eqnarray}
   \alpha_{1}B \| h_{x}\| ^2
  & \le & (C\alpha_{2} \|h^2_x\| + \alpha_{3})\|   h _{x} \|^2 . \label{4.2unique4}
\end{eqnarray}
Therefore if $\alpha_{1}$ is suitably large, then one gets
\begin{eqnarray}
     \| h_{x}\| ^2  \le  0,\label{4.2unique5}
\end{eqnarray}
it implies that $h_{x}$ is a.e. equal to $0$, i.e.  $h $ is a.e. equal to a constant $c$,
recalling the condition \eq{4.2aunique1}, and conclude that $h=0 $ a.e. in $\Omega$.

\vskip0.2cm
We are now going to derive some estimates on $\hat h$.

\vskip0.5cm
\noindent{\it Estimates of the stationary solution.} First,
choosing test function $\varphi = \hat h$ we obtain
\begin{eqnarray}
   \frac{\alpha_{1}}{2} ( |\hat h_{x}|\hat h_{x} + 2B \hat h_{x},\hat h_{x} )_{\Omega}
  + ((\alpha_{2}\hat \sigma_i + \alpha_{3} )(|\hat h_{x}| + B)  , \hat h) _{\Omega}   =0.
 \label{4.3}
\end{eqnarray}
From which we infer that
\begin{eqnarray}
   \int_\Omega \frac{\alpha_{1}}{2}(|\hat h_{x}| ^3   + 2B |\hat h_{x}|^2 + \alpha_{2}B\hat \sigma_i\hat h)dx
  &=&  \int_\Omega (\alpha_{2}\hat \sigma_i   + \alpha_{3} ) |\hat h_{x}|\hat h +\alpha_{3}B \hat h) dx\non\\
   & \le & \alpha_{2} \int_\Omega (( C +  \varepsilon  )|\hat h_{x}| ^3+ C_\varepsilon) dx + \alpha_{3}B\alpha .\ \
 \label{4.4}
\end{eqnarray}
Hence if $\alpha_{1}$ is sufficiently large,  for instance we can take $\gamma$   large, we then have
\begin{eqnarray}
   \|\hat h_{x}\|_{L^3(\Omega)},\  \|\hat h_{x}\|_{L^2(\Omega)}\le  \delta_1.
 \label{4.5}
\end{eqnarray}
Here $\delta_1$ is a suitably  small constant.

Integrating \eq{4.3} by parts one gets
\begin{eqnarray}
   \left(  (-\alpha_{1} \hat h_{xx} + \left(\alpha_{2}\hat \sigma_i + \alpha_{3}  )\right)
   (|\hat h_{x}| + B), \varphi \right) _{\Omega}   =0,
 \label{4.6}
\end{eqnarray}
 Choosing $\varphi = \hat h_{xx}$ we obtain easily
\begin{eqnarray}
   \| \hat h_{xx}\| +  \| \hat h_{x}\|_{L^\infty(\Omega)} \le  \delta_2,
 \label{4.5a}
\end{eqnarray}
where $\delta_2$ is small if $\alpha_{1}$ is sufficiently large. In the following context we
set $\delta:=\max\{\delta_1,\delta_2\}$, then \eq{4.5} and \eq{4.5a} hold simultaneously.

\subsection{Asymptotic behavior}

To investigate the large-time behavior of solution, we first reformulate a
problem for the difference of solution $h$ and the stationary solution $\hat h$,
which is more convenient to handle.

\noindent{\it Reformulation of the problem.} We define
$$
 u(t,x) = h(t,x) - \hat h(x),
$$
then it is easy that $u$ satisfies for almost every $(t,x)\in Q_{T_{e}}$, the problem
\begin{eqnarray}
 ( u_{t},\varphi)_\Omega - ((\alpha_{1} u_{xx} -  \alpha_{2}\tilde{\sigma}_i  )(| h_{x}| + B) ,
 \varphi)_{Q_{T_e}} = ({\cal G }, \varphi)_{Q_{T_e}}   .
 \label{4.7}
\end{eqnarray}
for all $\varphi\in C^\infty_{per}(\Omega)$. Here $\tilde{\sigma}_i :=  {\sigma}_i - \hat \sigma_i$
 and
$$
 {\cal G } :=  (\alpha_{1} u_{xx}  -  \alpha_{2}\tilde{\sigma}_i  )(| h_{x}| - | u_{x}|) .
$$
It is easy to get
\begin{eqnarray}
 |{\cal G } | &\le&  |  \alpha_{1} u_{xx}  -  \alpha_{2}\tilde{\sigma}_i |\, | h_{x}  -   u_{x}|
 = |  \alpha_{1} u_{xx}  -  \alpha_{2}\tilde{\sigma}_i |\, | \hat h_{x}   | \non\\
  &\le&  \delta |  \alpha_{1} u_{xx}  -  \alpha_{2}\tilde{\sigma}_i | ,
 \label{4.7a}
\end{eqnarray}
where \eq{4.5a} was used. The boundary and initial conditions now turn out to be
\begin{eqnarray}
 u(t,c) &=&  u(t,d),\quad   u_{x } (t,c) =  u_{x} (t,d),
 \label{4.7b}\\
 u(0,x) &=&  h_{0}(x)  -    \hat h_{x} .
 \label{4.7c}
\end{eqnarray}

We are going to prove there exists a global solution to problem \eq{4.7} -- \eq{4.7c} and investigate
its large-time behavior under suitable assumptions.
\begin{Proposition} Assume that $\| h_{0}  -   \hat h \|_{H^1(\Omega)}
\le \varepsilon$ and that there  exists a sufficiently large
$N$ such that $\gamma,\alpha_1\ge N$. Then there exists a unique global solution
$u=u(t,x)$ to  problem \eq{4.7} -- \eq{4.7c} such that $u\in L^\infty(0,\infty;H^1_{\rm per}(\Omega)) ,
u_x\in L^\infty(0,\infty;H^1_{\rm per}(\Omega))$.
Moreover    the solution $u$ converges, as $t\to\infty$, to the stationary
solution in the following sense
\begin{eqnarray}
 \sup_{c\le x\le d}|u(t,x)-\bar u(t,x)  | \to 0.
 \label{4.7e}
\end{eqnarray}

\end{Proposition}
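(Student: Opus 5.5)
We prove the Proposition by a small-data energy argument for the perturbation $u=h-\hat h$, combined with continuation of the local solution of Theorem~\ref{thm1.1A1}.

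\emph{Mean and energy identities.} Integrate \eq{4.7} with $\varphi=1$. The mean $\bar u(t)=\frac1{|\Omega|}\int_\Omega u\,dx$ is not conserved, because the source term $(\alpha_2\sigma_i+\alpha_3)(|h_x|+B)$ has nonzero integral. We therefore work with $w=u-\bar u$. This function has zero mean, is periodic, and satisfies $w_x=u_x$ and $w_{xx}=u_{xx}$. Hence the Poincar\'e--Wirtinger inequality $\|w\|_{L^\infty(\Omega)}\le C\|u_x\|$ and $\|u_x\|\le C\|u_{xx}\|$ are both available, which is what gives decay.

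Test \eq{4.7} with $\varphi=-u_{xx}/(|h_x|+B)$, mirroring the reciprocal-weighted estimate of Lemma~\ref{lm3.1}. This gives
\begin{equation*}
\frac{d}{dt}\Big(\frac{\alpha_1}{2}\|u_x\|^2+\frac{\pi\alpha_2}{L}\sum_{k\ge1}k(|\tilde a_k|^2+|\tilde a_{-k}|^2)\Big)+\int_\Omega\frac{|u_t|^2}{|h_x|+B}\,dx=\dots
\end{equation*}
Here the $\tilde\sigma_i$ contribution is handled exactly as in \eq{3.7} via the Fourier representation of the Hilbert transform, and the $\alpha_3$-term is now absent. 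A cleaner alternative is to multiply the strong form
\[
u_t=(\alpha_1u_{xx}-\alpha_2\tilde\sigma_i)(|h_x|+B)+{\cal G}
\]
by $-(\alpha_1u_{xx}-\alpha_2\tilde\sigma_i)$, which produces the dissipation
\[
D(t):=\int_\Omega(|h_x|+B)\,|\alpha_1u_{xx}-\alpha_2\tilde\sigma_i|^2\,dx\ \ge\ B\,\|\alpha_1u_{xx}-\alpha_2\tilde\sigma_i\|^2 .
\]

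\emph{Controlling the perturbation term.} By \eq{4.7a} and the smallness \eq{4.5a} of $\hat h$, the term ${\cal G}$ contributes at most $\delta\,\|\alpha_1u_{xx}-\alpha_2\tilde\sigma_i\|^2\le (\delta/B)\,D$. This is absorbed into $D$ once $\gamma$, and hence $\alpha_1$, is large enough that $\delta<B/2$.

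\emph{Main obstacle: coercivity of $D$.} We need $D$ to control $\|u_{xx}\|^2$ despite the nonlocal term. By Stein's estimate, $\|\tilde\sigma_i\|\le C\alpha_2$-scaled multiples of $\|u_x\|\le C\|u_{xx}\|$. Hence for $\alpha_1\ge N$ large relative to $\alpha_2=b$, which is exactly where the assumption $\gamma>C_1N$ enters, we get $\|\alpha_1u_{xx}-\alpha_2\tilde\sigma_i\|\ge\frac{\alpha_1}{2}\|u_{xx}\|$. Alternatively one uses the Fourier sign structure: $-\int u_{xx}\tilde\sigma_i\ge0$, since $\tilde\sigma_i$ is a positive multiple of $|D|u$.

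\emph{Differential inequality and global existence.} The result is an inequality of the form
\[
\frac{d}{dt}E(t)+c\|u_{xx}\|^2\le C\|u_x\|_{L^\infty(\Omega)}\|u_{xx}\|^2,\qquad E\simeq\|u_x\|^2 .
\]
The right-hand side comes from the weight $|h_x|\le|\hat h_x|+|u_x|$ in the time-derivative terms. Using $\|u_x\|_{L^\infty(\Omega)}\le C\|u_x\|^{1/2}\|u_{xx}\|^{1/2}$ and a bootstrap, we maintain $\|u_x(t)\|\le 2\varepsilon$ for all $t$; the initial smallness $\|h_0-\hat h\|_{H^1(\Omega)}\le\varepsilon$ starts it. Under this bound the right-hand side is absorbed, and we obtain
\[
E(t)+c\int_0^t\|u_{xx}\|^2\,d\tau\le E(0)\le C\varepsilon^2 .
\]
These bounds are uniform in time, so the local solution extends globally by continuation, as in Section~\ref{sec3.1}. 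Uniqueness follows from Theorem~\ref{thm1.1}.

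\emph{Convergence.} Poincar\'e's inequality turns the energy inequality into $E'+cE\le0$, so $E(t)\le Ce^{-ct}$. Alternatively, $\int_0^\infty\|u_{xx}\|^2\,d\tau<\infty$ together with a bound on $\frac{d}{dt}\|u_x\|^2$ gives $\|u_x(t)\|\to0$. Then
\[
\sup_x|u-\bar u|\le C\|u_x\|\to0,
\]
which is \eq{4.7e}.
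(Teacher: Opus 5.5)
Your core estimate is the paper's. You use the multiplier $-(\alpha_1u_{xx}-\alpha_2\tilde\sigma_i)$, absorb ${\cal G}$ via \eqref{4.7a} once $\delta<B/2$, and use the Fourier sign $-\int_\Omega u_{xx}\tilde\sigma_i\,dx\ge0$ of \eqref{4.11}. Where you genuinely differ is the passage to convergence.

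The paper extracts only $\int_0^\infty(\|u_x\|^2+\|u_{xx}\|^2)\,dt<\infty$. It then proves a second, reciprocally weighted bound on $u_t$, \eqref{4.13a}--\eqref{4.13b}, uses it to bound $\int_0^\infty|\int_\Omega\tilde\sigma_iu_t\,dx|\,dt$ in \eqref{4.14}, and deduces $\int_0^\infty|\frac{d}{dt}\|u_x\|^2|\,dt<\infty$. It concludes with the lemma that $f\ge0$ and $f+|f'|\in L^1$ force $f\to0$.

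You instead observe two things. First, $E=\frac{\alpha_1}{2}\|u_x\|^2+\alpha_2G$ is comparable to $\|u_x\|^2$, since $G\sim\sum_k|k|\,|a_k|^2$. Second, $E$ is dominated by the dissipation: the Fourier sign gives $\|\alpha_1u_{xx}-\alpha_2\tilde\sigma_i\|\ge\alpha_1\|u_{xx}\|$, and Wirtinger gives $\|u_x\|\le C\|u_{xx}\|$. Hence $E'+cE\le0$.

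This route is shorter, avoids the weighted $u_t$ estimate and the $f+|f'|$ lemma, and gives an exponential rate for $\sup_x|u-\bar u|$. It also yields something the paper's estimates do not. One has $|\frac{d}{dt}\bar u|\le C\|\alpha_1u_{xx}-\alpha_2\tilde\sigma_i\|$, and $\int_t^\infty\|\alpha_1u_{xx}-\alpha_2\tilde\sigma_i\|^2\,d\tau\le Ce^{-ct}$. So $\frac{d}{dt}\bar u\in L^1(0,\infty)$ and the mean stays bounded. This is needed for the asserted $u\in L^\infty(0,\infty;H^1_{\rm per}(\Omega))$. An $L^2$-in-time bound alone gives only $|\bar u(t)|\le C(1+\sqrt t)$. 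Your use of $w=u-\bar u$ for the Poincar\'e step also correctly replaces the paper's unjustified claim that $u(t,\cdot)$ vanishes somewhere.

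Your paragraph on the differential inequality should be deleted, not repaired. With the multiplier $-(\alpha_1u_{xx}-\alpha_2\tilde\sigma_i)$, the weight $|h_x|+B$ only multiplies the nonnegative dissipation. So no term $C\|u_x\|_{L^\infty(\Omega)}\|u_{xx}\|^2$ arises, and $\frac{d}{dt}E+(B-\delta)\|\alpha_1u_{xx}-\alpha_2\tilde\sigma_i\|^2\le0$ holds with no smallness of $u$ at all.

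Had such a term been present, your bootstrap would not close. Via $\|u_x\|_{L^\infty}\le C\|u_x\|^{1/2}\|u_{xx}\|^{1/2}$ it becomes $C\|u_x\|^{1/2}\|u_{xx}\|^{5/2}$, which is superquadratic in $\|u_{xx}\|$. Smallness of $\|u_x\|$ alone cannot absorb it into $c\|u_{xx}\|^2$, because $\|u_{xx}(t)\|$ is not controlled pointwise in time.

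Two smaller points:
\begin{itemize}
\item Your first displayed identity comes from testing with $u_t/(|h_x|+B)$, as in Lemma~\ref{lm3.1} and \eqref{4.13a}, not with $-u_{xx}/(|h_x|+B)$.
\item Neither your argument nor the paper's gives $\sup_t\|u_{xx}(t)\|<\infty$. So the claimed $u_x\in L^\infty(0,\infty;H^1_{\rm per}(\Omega))$ is established by neither; both yield only $u_{xx}\in L^2(0,\infty;L^2(\Omega))$.
\end{itemize}
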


In the following context, we assume that
\begin{eqnarray}
 \sup_{0\le t\le T_e}\|u(t)\|_{H^1(\Omega)}\le M < 1.
 \label{4.7d}
\end{eqnarray}

\noindent{\bf Proof.}
For the sake of convenience we derive from weak form of \eq{4.7}  we will
use the following form
\begin{equation}
 u_{t} - (\alpha_{1} u_{xx} -   \alpha_{2}\tilde\sigma_i  )(|h_{x}| + B)  = {\cal G}
 \label{4.1ae}
\end{equation}
for almost all $(t,x)\in Q_{T_{e}}$.
This  form is   more convenient in the investigation of large-time behavior
of the solution $h$.

Multiplying equation \eq{4.1ae}  by $\alpha_{1} u_{xx}  - \alpha_{2}\tilde{\sigma}_i  $ and integrating the
resulting equation with respect to $x$ over $\Omega$, we obtain
\begin{eqnarray}
  \frac{\alpha_{1}}2\frac{d}{dt} \|u_{x}\|^2   &+& \alpha_{2}\int_\Omega  \tilde{\sigma}_i u_t  dx
   +  \int_\Omega(\alpha_{1} u_{xx}  -  \alpha_{2}\tilde{\sigma}_i  )^2(| h_{x}| + B)dx\non\\
   & = & \int_\Omega {\cal G }  (  \alpha_{1} u_{xx}  -  \alpha_{2}\tilde{\sigma}_i )  dx \label{4.8a}\\
   & \le & \delta \int_\Omega   |  \alpha_{1} u_{xx}  -  \alpha_{2}\tilde{\sigma}_i |^2  dx.
 \label{4.8}
\end{eqnarray}
In a similar manner as in section of {\it a priori} estimates, we find
that $\int_\Omega  \tilde{\sigma}_i u_t  dx$
can be rewritten as a time derivative of a non-negative function $G(t)$. Thus
\begin{eqnarray}
   && \frac12\|u_{x}\|^2    + \alpha_{2}G(t)
   +  \int_0^t\int_\Omega\left(\alpha_{1} u_{xx}  - \alpha_{2}\tilde{\sigma}_i  \right)^2
   (| h_{x}| + B)dxd\tau \non\\
   & \le & \frac12\|u_{0x}\|^2   + \alpha_{2}G(0) +
   \delta \int_0^t\int_\Omega   |  \alpha_{1} u_{xx}  -  \alpha_{2}\tilde{\sigma}_i |^2  dxd\tau.
 \label{4.9}
\end{eqnarray}
 Letting $\delta \le \frac{B}{2}$, we obtain form  \eq{4.9} that
\begin{eqnarray}
 C &\ge& \int_0^t\int_\Omega\left(\alpha_{1} u_{xx}  - \alpha_{2}\tilde{\sigma}_i  \right)^2 dxd\tau\non\\
   & = & \int_0^t\int_\Omega\left(\alpha_{1}^2| u_{xx} |^2 - 2 \alpha_{1}   \alpha_{2} u_{xx}\tilde{\sigma}_i
    + \alpha_{2}^2|\tilde{\sigma}_i |^2 \right) dxd\tau,
 \label{4.10}
\end{eqnarray}
it is easy to show formally (to justify this, we can, for instance,  construct smooth solutions) that
\begin{eqnarray}
 - \int_0^t\int_\Omega    u_{xx}\tilde{\sigma}_i   dxd\tau
 &=&  \int_0^t \sum_{n=1}^{\infty} n^3(a_n^2 + a_{-n}^2)d\tau\non\\
 &\ge& 0,
 \label{4.11}
\end{eqnarray}
hence we infer from \eq{4.10} that
\begin{eqnarray}
  \int_0^t\int_\Omega (| u_{xx} |^2 + |\tilde{\sigma}_i |^2) dxd\tau
  \le C .
 \label{4.12}
\end{eqnarray}
Invoking that the periodic boundary conditions for $u$, we further get that there
exists a point say $x_0(t)\in (c,d)$
such that  $u(t,x_0(t))=0$, whence the Poincar\'e inequality of the form
$\|f\|\le C \|f_x\|$ for all $f\in H^1(\Omega)$, and
it follows from \eq{4.12} that
\begin{eqnarray}
  \int_0^t \| u_{x} \|^2 d\tau \le C.
 \label{4.13}
\end{eqnarray}

Moreover formally
we multiply \eq{4.1ae} by $u_t(| h_{x}| + B)^{-\frac12}$, make use of the H\"older inequality,
estimates \eq{4.9} and \eq{4.7a}, and the fact that $| h_{x}| + B\ge B$  to obtain
\begin{eqnarray}
 && \int_\Omega  \frac{ |u_{t}|^2}{| h_{x}| + B}  dx  + \frac{d}{dt}\left(\frac{\alpha_{1}}{2} \|u_{x}\|^2 + \alpha_{2}G(t) \right) \non\\
   &=& \int_\Omega {\cal G}  \frac{ |u_{t}| }{| h_{x}| + B}  dx =  \int_\Omega\frac{{\cal G}   }{(| h_{x}| + B)^\frac12}
    \frac{ |u_{t}| }{(| h_{x}| + B)^\frac12}  dx\non\\
   &\le& C\left(\int_\Omega  \frac{ |\alpha_{1}u_{xx} - \alpha_{2} \tilde\sigma_i|^2  }{| h_{x}| + B} dx\right)^\frac12
     \left(\int_\Omega\frac{ |u_{t}|^2 }{| h_{x}| + B}  dx\right)^\frac12   \non\\
   &\le& C\left(\int_\Omega    |\alpha_{1}u_{xx} - \alpha_{2} \tilde\sigma_i|^2  B^{-1} dx\right)^\frac12
     \left(\int_\Omega\frac{ |u_{t}|^2 }{| h_{x}| + B}  dx\right)^\frac12  ,
 \label{4.13a}
\end{eqnarray}
Integrating \eq{4.13a} with respect to $t$ yields
\begin{eqnarray}
  \int_0^t\int_\Omega  \frac{ |u_{t}|^2}{| h_{x}| + B}     dxd\tau     \le C  .
 \label{4.13b}
\end{eqnarray}

Next with the help of \eq{4.13b} we can get more estimate from   \eq{4.8a}, and write
\begin{eqnarray}
  \int_0^t\int_\Omega  \tilde{\sigma}_i u_t  dxd\tau
     = \int_0^t\int_\Omega  \tilde{\sigma}_i  (|h_x|+B)^\frac12 \frac{u_t}{(|h_x|+B)^\frac12}
   dxd\tau ,
 \label{4.14a}
\end{eqnarray}
then applying the H\"older inequality and the Sobolev embedding theorem, we estimate
\begin{eqnarray}
 \left|\int_0^t\int_\Omega  \tilde{\sigma}_i u_t  dxd\tau\right|
   & \le &  C  \left(\int_0^t(\|h_x\|_{L^\infty(\Omega)}+1)\| \tilde{\sigma}_i\|^2d\tau \right)^\frac12
   \left(\int_0^t\int_\Omega \frac{|u_t|^2}{ |h_x|+B }  dxd\tau\right)^\frac12\non\\
   & \le & C  \left(\int_0^t(\|u_x\|_{L^\infty(\Omega)}+1)\| u_x\|^2)d\tau \right)^\frac12 \non\\
   & \le & C  \left(\int_0^t(\|u_x\|_{L^\infty(\Omega)}^2 + \| u_x\|^4+\| u_x\|^2) d\tau \right)^\frac12\non\\
   & \le & C  \left(\int_0^t \|u_x\|_{H^1(\Omega)}^2   d\tau \right)^\frac12\non\\
   & \le & C  .
 \label{4.14}
\end{eqnarray}
We used here the estimates \eq{4.12} and \eq{4.13a}. Thus applying
  \eq{4.8} we infer from \eq{4.8a} that
\begin{eqnarray}
  \int_0^t \left|\frac{d}{dt} \|u_{x}\|^2\right|  d \tau
   &+& \int_0^t \int_\Omega(\alpha_{1} u_{xx}  -  \alpha_{2}\tilde{\sigma}_i  )^2(| h_{x}| + B-\delta )dx\non\\
   & \le &  \int_0^t  \left|\alpha_{2}\int_\Omega  \tilde{\sigma}_i u_t  dx \right|  d \tau  \non\\
   & \le &    C.
 \label{4.15a}
\end{eqnarray}
If we choose $ \delta$ is suitably small such that $B\ge \delta$, then the following estimate is obtained
\begin{eqnarray}
  \int_0^t \left|\frac{d}{dt} \|u_{x}\|^2\right|  d \tau       \le      C.
 \label{4.15}
\end{eqnarray}

To investigate the large-time behavior of the solution, we shall apply the following simple lemma
\begin{Lemma} Suppose that $f=f(t)$ is a non-negative function defined over $[0,\infty)$,
and is differentiable, such that
$$
 \int_{0}^{\infty} \left(f(t) + |f'(t)|\right)dt<\infty,
$$
then $f(t)\to 0$, as $t \to \infty$.
\end{Lemma}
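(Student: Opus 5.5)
The argument is by contradiction, using only that the limit $\lim_{t\to\infty} f(t)$ exists and that $f$ is integrable.

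\emph{Existence of the limit.} Since $f$ is differentiable and $f'\in L^1(0,\infty)$, for any $s<t$ the fundamental theorem of calculus gives
$$
 |f(t)-f(s)| = \Big|\int_s^t f'(\tau)\,d\tau\Big| \le \int_s^t |f'(\tau)|\,d\tau .
$$
The right-hand side tends to $0$ as $s,t\to\infty$, because it is the tail of a convergent integral. So $f$ satisfies the Cauchy criterion at infinity, and $\ell:=\lim_{t\to\infty}f(t)$ exists. Since $f\ge 0$, we have $\ell\ge 0$.

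\emph{The limit is zero.} Suppose $\ell>0$. Then there is $T$ with $f(t)\ge \ell/2$ for all $t\ge T$. Hence
$$
 \int_T^\infty f\,dt \ge \int_T^\infty \frac{\ell}{2}\,dt = \infty,
$$
which contradicts $f\in L^1(0,\infty)$. Therefore $\ell=0$, that is, $f(t)\to 0$ as $t\to\infty$.

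\emph{The one delicate point.} The fundamental theorem of calculus needs $f'$ to be integrable on bounded intervals, which holds here since $|f'|\in L^1$. More generally, it suffices that $f$ be absolutely continuous. This matters because in the application below $f(t)=\|u_x(t)\|^2$, and its differentiability is only justified in the a.e./absolutely continuous sense.

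In the application, \eqref{4.13} and \eqref{4.15} give exactly the two integrability hypotheses. We then get $\|u_x(t)\|\to 0$. Combining this with $u(t)-\bar u(t)$ having zero mean and the one-dimensional embedding $\sup|v-\bar v|\le C\|v_x\|$ yields \eqref{4.7e}.
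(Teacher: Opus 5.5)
Your proof is correct. The tail bound $|f(t)-f(s)|\le\int_s^t|f'|\,d\tau$ gives, by the Cauchy criterion, the existence of $\ell=\lim_{t\to\infty}f(t)\ge 0$. If $\ell>0$, then $f\ge\ell/2$ on a half-line, which contradicts $\int_0^\infty f\,dt<\infty$.

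The paper states this lemma without proof, calling it a ``simple lemma'', so there is no competing argument to compare against. Your argument is the standard one and supplies exactly what is omitted.

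One point of precision concerns the identity $f(t)-f(s)=\int_s^t f'\,d\tau$. For a function differentiable at \emph{every} point with $f'\in L^1$, this is a genuine theorem (e.g.\ Rudin, \emph{Real and Complex Analysis}, Thm.~7.21). It does not follow from integrability of $f'$ alone, and it fails for functions differentiable only almost everywhere, such as the Cantor function. So your closing caveat is the right one. When the lemma is applied to $f(t)=\|u_x(t)\|^2$, what is actually needed is absolute continuity of $t\mapsto\|u_x(t)\|^2$, and the paper takes this for granted rather than verifying it.
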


Now we define $ f(t):=\|u_{x}(t)\|^2$, then from \eq{4.13} and \eq{4.15} it follows that
$$
 \int_0^\infty\left( f(t) + \left|\frac{d}{dt}f(t)\right|\right)dt \le C,
$$
and conclude that
$$
 \|u_{x}(t)\|^2\to 0 , \ {\rm as}\  t\to\infty.
$$
Hence making use of the Poincar\'e inequality again  we obtain
$$
  \|u  - \bar u \|_{L^\infty(\Omega)}(t)  \to 0.
$$
Namely as $t\to\infty$,
$$
  \sup_{c\le x\le d} \left|h(t,x)  - \hat h(x)
  - \frac{1}{|\Omega|}\left(\int_\Omega  h(t,x)dx  -  \alpha \right) \right|  \to 0.
$$
Therefore we complete the proof of  Theorem~\ref{thm1.3}.

\section{Comparison with numerical solutions}
 \label{sec5}

We will employ the spectral method based on the Fast Fourier Transform
in this section to perform numerical simulation about the results in
Theorem~\ref{thm1.3}, especially the convergence of solution $h$ to
 its steady state $\hat h$.
We successfully reproduce the numerical benchmark case from \cite{ZHXS17} for the following problem
\begin{equation}
h_t = M_d\left((\sigma_i + \tau)b + \Psi H - \gamma h_{xx}H\right) \left( |h_x| + B \right),
\label{5.1}
\end{equation}
with initial and boundary conditions \eq{1.2} -- \eq{1.3}.
The equilibrium state of the system is determined solely by the applied stress $\tau$ and is independent
of the dissipation parameter $B$. Specifically, the equilibrium profile satisfies
in the sense of Definition~\ref{def1.3} the following  equation
\begin{equation}
 (\sigma_i + \tau) b + \Psi H - \gamma h_{xx} H = 0,
 \label{5.2}
\end{equation}
with periodic boundary conditions.
\begin{figure}[htbp]
 \begin{center}
	\includegraphics[width=5.8in]{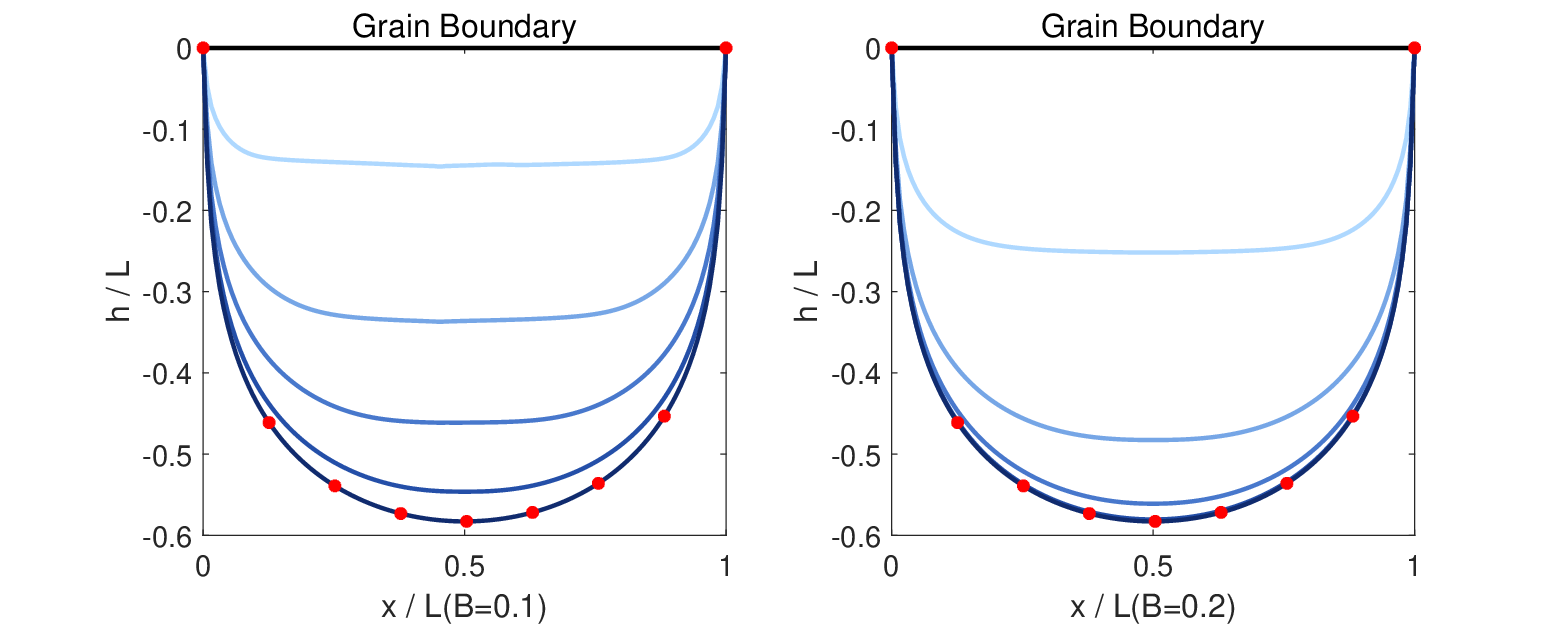}
 \end{center}
 \caption{ Comparative analysis of grain boundary (GB) evolution dynamics
   under $\tau = 0.1\mu$ shear stress with bilateral pinning. The lines
   represent GB profiles at characteristic time steps:
		$t\in \{0,2t_0,5t_0,8t_0,12t_0,+\infty\}$ where $t_0=L/(M_d\gamma)$.
  The color deepens with increasing time.  Left figure: $B = 0.1$; right
  figure: $B = 0.2$. The red stars (\textcolor{red}{$\star$}) represent
  the analytical equilibrium solution given by Eq.~\eqref{5.2}, toward
  which the simulated GB profiles converge over time. }
  \label{fig:growth}
\end{figure}

We adopt the same physical parameters as in Ref.~\cite{ZHXS17}.
The simulations are conducted for aluminum, with shear modulus $\mu = 26.5$~GPa,
Poisson's ratio $\nu = 0.347$, and lattice constant $a_0 = 4.0495$~\AA. The grain
boundary parameters include a Burgers vector magnitude $|\mathbf{b}| = a_0/\sqrt{10}
\approx 1.28$~\AA, step height $H = a_0/\sqrt{10} \approx 1.28$~\AA, and energy
density $\gamma = 0.5$~J/m$^2$. The simulation domain has a length of $L = 10^{-7}$~m,
and the disconnection mobility is set to $M_d = 1.0$~m$^2$/J$\cdot$s. To efficiently
compute the nonlocal stress integral $\sigma_i(x,t)$ and spatial derivatives $h_{x}$
and $h_{xx}$, we implement a spectral method based on the Fast Fourier Transform (FFT).

As shown in Fig.~\ref{fig:growth}, under applied shear stress $\tau = 0.1\mu$, the
grain boundary undergoes a morphological transition from an initially flat interface
to a stable curved configuration. The evolution process exhibits a clear dependence
on the dissipation parameter $B$: higher values of $B$ lead to faster relaxation
while maintaining the same final equilibrium profile. The equilibrium shapes
$h_{\mathrm{eq}}(x)$ predicted by the continuum model~\eq{5.1},
show excellent agreement with the analytical solution of Eq.~\eqref{5.2}.

\vskip0.5cm
\noindent {\bf Acknowledgements.}
  The first author of this article is supported in part by Science and
  Technology Commission of Shanghai Municipality (Grant No. 20JC1413600)
  and by Tongji University Medicine-X Interdisciplinary Research
   Initiative under contract No. 2025-0553-ZD-11;
   The second author is supported in part by National Natural Science Foundation of China 12401569 and Shanghai Sailing Program 24YF2712700;
    The  third author is supported in part  by the Hong Kong Research
  Grants Council General Research Fund 16309825.


\end{document}